%Don't forget to erase
%overfullrule
%\cmt

\documentclass[12pt, leqno]{amsart}

\usepackage{texdraw,multicol,rotating}

\def\oh{\overline{h}}
\def\qed{\hfill $\sqcap \hskip-6.5pt \sqcup$}

\setlength{\textwidth}{15.5cm} \setlength{\textheight}{20cm}
\setlength{\oddsidemargin}{0.0cm} \setlength{\evensidemargin}{0.0cm}

\usepackage{amsmath}
\usepackage{amssymb}
\usepackage{enumerate}
\usepackage[all]{xy}

\input{txdtools.tex}

\numberwithin{equation}{section}

\theoremstyle{plain}
\newtheorem{thm}{Theorem}[section]
\newtheorem{prop}[thm]{Proposition}
\newtheorem{lem}[thm]{Lemma}
\newtheorem{cor}[thm]{Corollary}

\theoremstyle{definition}
\newtheorem{defi}[thm]{Definition}

\newtheorem{example}[thm]{Example}

\theoremstyle{remark}

%\newbox{\tmppic}
%\newbox{\tmpdraw}
%\newbox{\tmpfig}
\newbox{\tmpa}
\newbox{\tmpb}

\DeclareMathOperator{\wt}{wt}
\newcommand{\nc}{\newcommand}
\nc{\Uq}{U_q} \nc{\Z}{\mathbf{Z}} \nc{\C}{\mathbf{C}}
\nc{\Q}{\mathbf{Q}}
%\nc{\O}{\mathcal{O}}
\nc{\op}{\oplus} \nc{\ot}{\otimes} \nc{\pv}{P^{\vee}}
\nc{\ali}{\alpha_i} \nc{\B}{\mathbf{B}} \nc{\F}{\mathbf{F}}
\nc{\bP}{\mathbf{P}} \nc{\V}{\mathbf{V}} \nc{\La}{\Lambda}
\nc{\la}{\lambda} \nc{\nbinom}[2]{\genfrac{}{}{0pt}{1}{{#1}}{{#2}}}
\nc{\qbinom}[2]{\left[\genfrac{}{}{0pt}{1}{{#1}}{{#2}}\right]}
%\nc{\qbinom}{\begin{bmatrix} {} \\ {} \end{bmatrix}}
\nc{\path}{\mathcal{P}} \nc{\fit}{\tilde{f}_i}
\nc{\eit}{\tilde{e}_i} \nc{\fjt}{\tilde{f}_j} \nc{\ejt}{\tilde{e}_j}
\nc{\Y}{\mathbf{Y}} \nc{\A}{\mathbf{A}} \nc{\ra}{\rightarrow}
\nc{\vep}{\varepsilon} \nc{\vphi}{\varphi} \nc{\vp}{\varphi}
\nc{\g}{\mathfrak{g}} \nc{\h}{\mathfrak{h}} \nc{\oP}{\overline{P}}
\nc{\pathp}{\mathbf{p}} \nc{\N}{\mathcal{N}} \nc{\R}{\mathcal{R}}

\nc{\tris}{ \bsegment \move(0 0)\lvec(10 0)\lvec(10 10)\lvec(0
0)\ifill f:0.7 \esegment } \nc{\recs}{ \bsegment \move(0 0)\lvec(10
0)\lvec(10 5)\lvec(0 5)\lvec(0 0)\ifill f:0.7 \esegment }
\nc{\hcvec}[5]{%
\getpos(#1 #3)\spx\spy \getpos(#2 #3)\epx\epy \getpos(#4
#5)\xoff\yoff \realadd \spx \xoff \twox \realadd \epx {-\xoff} \thrx
\realadd \spy \yoff \posy \move({\spx} {\spy}) \clvec ({\twox}
{\posy})({\thrx} {\posy})({\epx} {\epy}) \rmove(0 0) }

\nc{\ahead}[2]{%
\cossin (0 0)({#1} {#2})\cosa\sina \bsegment
  \drawdim in \setunitscale 0.065
  \realmult {-0.5} \cosa \hcosa
  \realmult {-0.5} \sina \hsina
  \move({\hcosa} {\hsina}) \ravec({\cosa} {\sina})
\esegment }
\nc{\boxi}{%
{%
\savebox{\tmppic}{\begin{texdraw} \small \drawdim em \textref h:C
v:C \setunitscale 0.55 \htext(0 0){$i$} \move(-1 -1)\lvec(-1
1)\lvec(1 1)\lvec(1 -1)\lvec(-1 -1)
\end{texdraw}}%
\raisebox{-0.19\height}{\usebox{\tmppic}}%
}%
}
\nc{\boxj}{%
{%
\savebox{\tmppic}{\begin{texdraw} \small \drawdim em \textref h:C
v:C \setunitscale 0.55 \htext(0 0.1){$j$} \move(-1 -1)\lvec(-1
1)\lvec(1 1)\lvec(1 -1)\lvec(-1 -1)
\end{texdraw}}%
\raisebox{-0.19\height}{\usebox{\tmppic}}%
}%
}
\nc{\boxipo}{%
{%
\savebox{\tmppic}{\begin{texdraw} \small \drawdim em \textref h:C
v:C \setunitscale 0.55 \htext(0.15 0){$i\!\!+\!\!1$} \move(-1.4
-1)\lvec(-1.4 1)\lvec(1.4 1)\lvec(1.4 -1)\lvec(-1.4 -1)
\end{texdraw}}%
\raisebox{-0.19\height}{\usebox{\tmppic}}%
}%
} \everytexdraw{ \drawdim in \arrowheadsize l:0.065 w:0.03
\arrowheadtype t:F \textref h:C v:C }
\newsavebox{\tmppic}
\newsavebox{\tmpfig}
\newsavebox{\tmpdraw}
\newsavebox{\tmpfiga}
\newsavebox{\tmpfigb}
\newsavebox{\tmpfigc}
\newsavebox{\tmpfigd}
\newsavebox{\tmpfige}
\newsavebox{\tmpfigf}
\newsavebox{\tmpfigg}
\newsavebox{\tmpfigh}
\newsavebox{\tmpfigi}
\newsavebox{\tmpfigj}
\newsavebox{\tmpfigk}
\newsavebox{\tmpfigl}
\newsavebox{\tmpfigm}
\newsavebox{\tmpfign}
\newsavebox{\tmpfigo}
\newsavebox{\tmpfigp}
\newsavebox{\tmpfigq}
\newsavebox{\tmpfigr}
\newsavebox{\tmpfigs}
\newsavebox{\tmpfigt}
\newsavebox{\tmpfigu}
\newsavebox{\tmpfigv}
\newsavebox{\tmpfigw}
\newsavebox{\tmpfigx}
\newsavebox{\tmpfigy}
\newsavebox{\tmpfigz}

\newsavebox{\tmpfigaa}
\newsavebox{\tmpfigab}
\newsavebox{\tmpfigac}
\newsavebox{\tmpfigad}
\newsavebox{\tmpfigae}
\newsavebox{\tmpfigaf}
\newsavebox{\tmpfigag}
\newsavebox{\tmpfigah}
\newsavebox{\tmpfigai}
\newsavebox{\tmpfigaj}
\newsavebox{\tmpfigak}
\newsavebox{\tmpfigal}
\newsavebox{\tmpfigam}
\newsavebox{\tmpfigan}
\newsavebox{\tmpfigao}
\newsavebox{\tmpfigap}
\newsavebox{\tmpfigaq}
\newsavebox{\tmpfigar}
\newsavebox{\tmpfigas}
\newsavebox{\tmpfigat}
\newsavebox{\tmpfigau}
\newsavebox{\tmpfigav}
\newsavebox{\tmpfigaw}
\newsavebox{\tmpfigax}
\newsavebox{\tmpfigay}
\newsavebox{\tmpfigaz}

\newsavebox{\tmpfigba}
\newsavebox{\tmpfigbb}
\newsavebox{\tmpfigbc}
\newsavebox{\tmpfigbd}
\newsavebox{\tmpfigbe}
\newsavebox{\tmpfigbf}
\newsavebox{\tmpfigbg}
\newsavebox{\tmpfigbh}

\nc{\node}{\lcir r:1 }
% single line
\nc{\sline}{\bsegment\savepos(10 0)(*ex *ey)
            \move(1 0)\rlvec(8 0)
            \esegment\move(*ex *ey)}
% double line
\nc{\dline}{\bsegment\savepos(10 0)(*ex *ey)
            \move(0.93 0.4)\rlvec(8.14 0)\rmove(0 -0.8)\rlvec(-8.14 0)
            \esegment\move(*ex *ey)}
% vertical(up) line
\nc{\uline}{\bsegment\savepos(0 10)(*ex *ey)
            \move(0 1)\rlvec(0 8)
            \esegment\move(*ex *ey)}
% left arrow
\nc{\lpoint}{\savecurrpos(*ex *ey)
             \rmove(2.5 1.5)\rlvec(-1.5 -1.5)\rlvec(1.5 -1.5)
             \move(*ex *ey)}
% right arrow
\nc{\rpoint}{\savecurrpos(*ex *ey)
             \rmove(-2.5 -1.5)\rlvec(1.5 1.5)\rlvec(-1.5 1.5)
             \move(*ex *ey)}
% thick(bold) line
\nc{\bline}{\bsegment\savepos(10 0)(*ex *ey)
            \linewd 0.6 \move(1.1 0)\rlvec(7.8 0)
            \esegment\move(*ex *ey)}

\nc{\araise}[1]{\raisebox{4.5pt}{#1}}
\nc{\braise}[1]{\raisebox{12.1pt}{#1}}
\nc{\craise}[1]{\raisebox{8pt}{#1}}
\nc{\draise}[1]{\raisebox{12pt}{#1}} \nc{\be}{\begin{enumerate}}
\nc{\ee}{\end{enumerate}} \nc{\bnum}{\begin{enumerate}[{\rm(i)}]}
\nc{\cl}{\colon} \nc{\seteq}{\mathbin{:=}} \nc{\re}{\mathrm{re}}
\nc{\im}{\mathrm{im}} \nc{\ran}{\rangle} \nc{\lan}{\langle}
\nc{\on}{\operatorname}
\newcommand{\set}[2]{\left\{#1\,;\,#2\,\right\}}
\nc{\Hom}{\on{Hom}} \nc{\Oint}{\mathcal{O}_{\mathrm{int}}}
\nc{\Wt}{\on{Wt}} \nc{\pP}{\widetilde{P}} \nc{\eq}{\begin{eqnarray}}
\nc{\eneq}{\end{eqnarray}} \nc{\eqn}{\begin{eqnarray*}}
\nc{\eneqn}{\end{eqnarray*}} \nc{\Lemma}{\begin{lem}}
\nc{\enlemma}{\end{lem}}
\newcommand{\isoto}[1][]{\mathop{\xrightarrow[#1]%
{\rule{0pt}{.9ex}%
{\raisebox{-.35ex}[0ex][-.6ex]{$\mspace{1mu}\sim\mspace{2mu}$}}}}}

\nc{\hs}{\hspace*} \nc{\bfi}{\mathbf{i}} \nc{\eps}{\varepsilon}
\nc{\ba}{\begin{array}} \nc{\ea}{\end{array}}
\renewcommand{\Im}{\operatorname{Im}}
\nc{\tf}{\tilde{f}} \nc{\id}{\operatorname{id}} \nc{\bl}{\bigl}
\nc{\br}{\bigr} \hyphenation{Bor-cherds}
\nc{\Irr}{\on{Irr}}
\nc{\Id}{\on{Id}}
\nc{\out}{\operatorname{out}}
\nc{\sink}{\operatorname{in}}
\nc{\ol}{\overline}

\begin{document}

\title[Geometric Construction of Crystal Bases]
      {Geometric Construction of Crystal Bases for \\ Quantum Generalized Kac-Moody Algebras}
\author[S.-J. Kang, M. Kashiwara, O. Schiffmann]{Seok-Jin Kang$^{1}$,
Masaki Kashiwara$^{2}$, Olivier Schiffmann}

\address{Department of Mathematical Sciences
         and
         Research Institute of Mathematics \\
         Seoul National University \\ San 56-1 Sillim-dong, Gwanak-gu \\ Seoul 151-747, Korea}

         \email{sjkang@math.snu.ac.kr}

\address{Research Institute for Mathematical Sciences \\
         Kyoto University \\ Kitashirakawa, Sakyo-Ku \\ Kyoto 606-8502, Japan}

         \email{masaki@kurims.kyoto-u.ac.jp}

\address{Universit\'e Pierre et Marie Curie \\
         D\'epartement de Math\'ematiques\\ 175 rue du Chevaleret  \\ 75013 Paris,  France}

         \email{olive@math.jussieu.fr}

\thanks{$^{1}$This research was supported by KRF Grant \# 2007-341-C00001.}
\thanks{$^{2}$This research was partially supported by Grant-in-Aid for Scientific Research (B)
18340007, Japan Society for the Promotion of Science.}

\begin{abstract}

We provide a geometric realization of the crystal $B(\infty)$ for
quantum generalized Kac-Moody algebras in terms of the irreducible
components of certain Lagrangian subvarieties in the
representation spaces of a quiver.

\end{abstract}

\maketitle

\vskip 1cm

%%%%%%%%%%%%%%%%%%%%%%%%%%%%%%%%%%%%%%%%%%%%%%%%%%%%%%%%%%%%%%%%%%%%%%%%%%%%%%%%%%%%%%%%%%%%%%%%%%

\section*{Introduction}

There is a well-known and very fruitful interaction between the
structure theory of quantum groups on the one hand and the geometry
of quiver representations on the other. In the late 80's, Ringel
realized the positive part $U_q^{+}(\mathfrak{g})$ of the quantized
enveloping algebra of a Kac-Moody algebra $\mathfrak{g}$ in the Hall
algebra of any quiver whose underlying graph is the Dynkin diagram
of $\mathfrak{g}$ (\cite{Ringel}). This was soon followed by
Lusztig's geometric construction of the {\it canonical basis}
$\mathbf{B}$ for $U_q^{+}(\mathfrak{g})$ in terms of simple perverse
sheaves on the moduli spaces $\mathcal{M}_{\alpha}$ of
representations of quivers (\cite{Lus}). The combinatorial structure
of this canonical basis is encoded in a colored graph $B(\infty)$,
the {\it crystal graph} of $U_q^{-}(\mathfrak{g})$, whose vertices
are the elements of $\mathbf{B}$ (\cite{Kas91}).

By studying the cotangent geometry of $\mathcal{M}_{\alpha}$,
Kashiwara and Saito later gave a geometric construction of the
crystal graph $B(\infty)$ (\cite{KS97}). More precisely, they
considered a certain Lagrangian subvariety $\mathcal{N}_{\alpha}
\subset T^* \mathcal{M}_{\alpha}$ (first introduced in \cite{Lus91})
and built a graph $\mathcal{B}$ whose vertices are the irreducible
components of $\bigsqcup_{\alpha} \mathcal{N}_{\alpha}$ and whose
arrows correspond to various generic fibrations between irreducible
components of $\N_{\alpha}$ for different values of $\alpha$. Using
a combinatorial characterization of $B(\infty)$ in terms of tensor
products with {\it elementary crystals}, the authors of \cite{KS97}
identified $\mathcal{B}$ with $B(\infty)$. This work was later
generalized by Saito who realized the crystals of all highest weight
integrable representations using Lagrangian subvarieties in
Nakajima's quiver varieties (see \cite{Sai}, \cite{Nak97}).

%\vspace{.2in}

In another direction, Borcherds was led in his study of the
Moonshine and the Monster group to consider a new class of
infinite-dimensional Lie algebras, now called the {\it generalized
Kac-Moody algebras} (\cite{Bo88}). Although similar to Kac-Moody
algebras in several respects, generalized Kac-Moody algebras allow
for \textit{imaginary} simple roots and play an important role in
several different areas of mathematics (see, for example,
\cite{Bo88, Bo92, FRS97, KKwon00, Na95}). The notion of the
quantized enveloping algebra of a generalized Kac-Moody algebra
${\mathfrak g}$ was defined in \cite{Kang95}. Several important
structural properties of quantum groups were shown to exist also in
this generalized setting. For instance, the integrable highest
weight modules over ${\mathfrak g}$ can be deformed to those over
$U_q(\g)$ in such a way that the dimensions of weight spaces are
invariant under the deformation.

In \cite{JKK}, the crystal basis theory was developed for quantum
generalized Kac-Moody algebras and in \cite{JKKS}, the notion of
{\it abstract crystals} was introduced. Moreover, in \cite{JKKS},
the authors proved a crystal embedding theorem and gave a
characterization of the highest weight crystals $B(\infty)$ and
$B(\lambda)$ for quantum generalized Kac-Moody algebras.

This paper is part of the project to extend the approach based on
the geometry of quiver representations to the case of generalized
Kac-Moody algebras and their quantized enveloping algebras. A
(partly conjectural) geometric construction of the canonical basis
$\mathbf{B}$ of $U_q^{+}(\mathfrak{g})$ for an (even) generalized
Kac-Moody algebra $\mathfrak{g}$ was given in \cite{KaSc} (see also
\cite{Lus2} and \cite{LiYi}). At the level of quivers, moving from
Kac-Moody algebras to generalized Kac-Moody algebras means that one
must now consider the quivers with edge loops and
\textit{semisimple} rather than simple perverse sheaves. In the
present work, we provide an analogue of the construction in
\cite{KS97} of the crystal $B(\infty)$. Namely, we consider a
certain Lagrangian subvariety $\mathcal{N}_{\alpha} \subset T^*
\mathcal{M}_{\alpha}$, and build a crystal graph out of its
irreducible components and generic fibrations among them. The
noticeable difference with \cite{KS97} is that the fibrations in
question correspond, at the level of the representation theory of
quivers, to extensions by \textit{nonrigid} simple objects; i.e.,
the objects with non-vanishing self $\text{Ext}^1$-- typically a
simple object sitting at a vertex with edge loops. We get around
this difficulty by restricting our Lagrangian variety to the
cotangent bundle of a certain open subset of $\mathcal{M}_{\alpha}$
by imposing that certain arrows are {\it regular semisimple} (see
Section~2).

The paper is organized as follows. In Section 1, we recall various
definitions and results pertaining to the crystal basis theory for
quantum generalized Kac-Moody algebras as developed in \cite{JKKS}.
In particular, we recall the characterization of the crystal
$B(\infty)$ in terms of strict crystal embeddings $B(\infty) \to
B(\infty) \otimes B_i$ for $i\in I$ (see
Theorem~\ref{thm:B(infty)}). Sections 2 and Section 3 are devoted to
the definition and study of the Lagrangian variety
$\mathcal{N}_{\alpha}$. The crystal structure on the set
$\mathcal{B}$ of irreducible components of $\bigsqcup_{\alpha}
\N_{\alpha}$ is described in Section~3 (see
Theorem~\ref{thm:crystal}). Finally, in Section~4, we prove the
crystal isomorphism $\mathcal{B} \cong B(\infty)$ by constructing
strict crystal embeddings $\mathcal{B} \to \mathcal{B} \otimes B_i$
for all $i\in I$ (see Theorem~\ref{thm:main}).

\vskip 1cm

%%%%%%%%%%%%%%%%%%%%%%%%%%%%%%%%%%%%%%%%%%%%%%%%%%%%%%%%%%%%%%%%%%%%%%%%%%%%%%%%%%%%%%%%%%%%%%%%%%

\section{Abstract Crystals}

\vskip 3mm

Let $I$ be a finite or countably infinite index set and let
$A=(a_{ij})_{i, j\in I}$ be a {\it symmetric even integral
Borcherds-Cartan matrix}. That is, $A$ satisfies: \ (i) $a_{ii}
\in \{2, 0, -2, -4, \ldots\}$ for all $i\in I$, \ (ii) $a_{ij} =
a_{ji} \in \Z_{\le 0}$ for $i \neq j$. We say that an index $i\in
I$ is {\it real} if $a_{ii}=2$ and {\it imaginary} if $a_{ii}\le
0$. We denote by $I^{re}=\set{i\in I}{a_{ii}=2}$ and $I^{im}=\set{
i \in I}{a_{ii} \le 0 }$ the set of real indices and the set of
imaginary indices, respectively.

A {\it Borcherds-Cartan datum} $(A, P, \Pi, \Pi^{\vee})$ consists of

\begin{itemize}
\item[(i)] a Borcherds-Cartan matrix $A=(a_{ij})_{i, j \in I}$,

\item[(ii)] a free abelian group $P$, the {\em weight lattice},

\item[(iii)] $\Pi=\set{\alpha_i\in P}{i\in I}$, the set of {\em
simple roots},

\item[(iv)] $\Pi^{\vee} = \set{ h_i}{ i\in I }\subset
P^\vee\seteq\Hom(P,\Z)$, the set of {\em simple coroots}
\end{itemize}
satisfying the following properties:
\begin{itemize}
\item[(a)] $\langle h_i,\alpha_j \rangle = a_{ij}$ for all $i, j \in
I$,

\item[(b)] $\Pi$ is linearly independent,

\item[(c)] for any $i\in I$, there exists $\Lambda_i\in P$ such that
$\langle h_j,\Lambda_i \rangle=\delta_{ij}$ for all $j\in I$.
\end{itemize}

\noindent
%We denote by $P^{+}=\set{\lambda \in P}{\lambda(h_i) \ge 0
%\ \ \text{for all} \ i \in I}$ the set of {\it dominant integral
%weights}.
We use the notation $Q=\bigoplus_{i\in I} \Z
\alpha_i$ and $Q_{+}=\sum_{i\in I} \Z_{\ge 0} \alpha_i$.

Let $q$ be an indeterminate. For an integer $n \in \Z$, define
\begin{equation*}
[n] = \dfrac{q^n - q^{-n}} {q - q^{-1}}, \qquad [n]! =
\prod_{k=1}^n [k], \qquad \left[\begin{matrix} m \\
n \end{matrix}\right] = \dfrac{[m]!}{[n]! [m-n]!}.
\end{equation*}
For a Borcherds-Cartan datum $(A, P, \Pi, \Pi^{\vee})$, the {\it
quantum generalized Kac-Moody algebra} $U_q(\g)$ is defined to be
the associated algebra over $\Q(q)$ with 1 generated by the elements
$e_i$, $f_i$ $(i\in I)$, $q^h$ $(h\in P^{\vee})$ subject to the
defining relations: {\allowdisplaybreaks
\begin{equation} %\label{eq:rel}
\begin{aligned}
& q^0 =1, \quad q^h q^{h'} = q^{h+h'}\quad\text{for} \ \ h, h' \in
P^{\vee}, \\
& q^h e_i q^{-h} = q^{\alpha_i(h)} e_i, \quad q^h f_i q^{-h} =
q^{-\alpha_i(h)} f_i \quad\text{for $h\in P^{\vee}$, $i\in I$,} \\
& e_i f_j - f_j e_i = \delta_{ij} \dfrac{K_i - K_i^{-1}} {q -
q^{-1}} \quad\text{for $i, j \in I$, where $ K_i =
q^{h_i}$,} \\
& \sum_{k=0}^{1-a_{ij}} (-1)^k \left[\begin{matrix} 1-a_{ij} \\ k
\end{matrix} \right] e_i^{1-a_{ij}-k} e_j e_i^k =0 \quad\text{if
$i\in I^{re}$ and $i\neq j$,}\\
& \sum_{k=0}^{1-a_{ij}} (-1)^k \left[\begin{matrix} 1-a_{ij} \\ k
\end{matrix} \right] f_i^{1-a_{ij}-k} f_j f_i^k =0 \quad
\text{if $i\in I^{re}$ and $i\neq j$,} \\
& e_i e_j - e_j e_i = f_i f_j - f_j f_i =0 \quad\text{if
$a_{ij}=0$.}
\end{aligned}
\end{equation}
} \noindent We denote by $U_q^+(\g)$ (resp.\ $U_q^-(\g)$) the
subalgebra of $U_q(\g)$ generated by the $e_i$'s (resp.\ the
$f_i$'s).

\vskip 3mm

We recall the notion of {\it abstract crystals} for quantum
generalized Kac-Moody algebras introduced in \cite{JKKS}.

\begin{defi} \label{defi:abstract crystal} An {\em abstract
$U_q(\g)$-crystal} or simply a {\em crystal\/} is a set $B$ together
with the maps $\wt \cl B \rightarrow P$, $\eit, \fit \cl B \rightarrow B
\sqcup \{0\}$ and $\vep_i, \vphi_i \cl  B \rightarrow \Z \sqcup
\{-\infty\}$ $(i\in I)$ satisfying the following conditions:

\begin{itemize}
\item[(i)] $\wt(\eit b) = \wt b + \alpha_i$ if $i\in I$ and
$\eit b \neq 0$,

\item[(ii)] $\wt(\fit b) = \wt b - \alpha_i$ if $i\in I$ and
$\fit b \neq 0$,

\item[(iii)] for any $i \in I$ and $b\in B$, $\vphi_i(b) = \vep_i(b) +
\langle h_i, \wt b \rangle$,

\item[(iv)] for any $i\in I$ and $b,b'\in B$,
$\fit b = b'$ if and only if $b = \eit b'$,\label{cond7}

\item[(v)] for any $i \in I$ and $b\in B$
such that $\eit b \neq 0$, we have \be[{\rm(a)}]
\item
$\vep_i(\eit b) = \vep_i(b) - 1$, $\vphi_i(\eit b) = \vphi_i(b) + 1$
if $i\in I^\re$,
\item
$\vep_i(\eit b) = \vep_i(b)$, \ $\vphi_i(\eit b) = \vphi_i(b) +
a_{ii}$ if $i\in I^\im$, \ee

\item[(vi)] for any $i \in I$ and $b\in B$ such that $\fit b \neq 0$,
we have \be[{\rm(a)}]
\item
$\vep_i(\fit b) = \vep_i(b) + 1$, $\vphi_i(\fit b) = \vphi_i(b) - 1$
if $i\in I^\re$,
\item
$\vep_i(\fit b) = \vep_i(b)$, \ $\vphi_i(\fit b) = \vphi_i(b) -
a_{ii}$ if $i\in I^\im$, \ee

\item[(vii)] for any $i \in I$ and $b\in B$ such that $\vphi_i(b) = -\infty$, we
have $\eit b = \fit b = 0$.

\end{itemize}
\end{defi}

\begin{defi}\label{def:mor}
Let $B_1$ and $B_2$ be crystals. A map $\psi\cl B_1\rightarrow B_2$
is a {\it crystal morphism} if it satisfies the following
properties:

\bnum
\item for $b\in B_1$, we have
\begin{equation*}
\text{$\wt(\psi(b))=\wt(b)$, $\vep_i(\psi(b))=\vep_i(b)$,
$\vp_i(\psi(b))=\vp_i(b)$ for all $i\in I$,}
\end{equation*}
\item for $b\in B_1$ and $i\in I$ with $\fit b\in B_1$, we have
$\psi(\fit b)=\fit\psi(b)$. \label{cond:crysmor2} \ee
\end{defi}

\begin{defi}
Let $\psi\cl B_1\rightarrow B_2$ be a crystal morphism.

\be[{\quad \rm(a)}]
\item $\psi$ is called a {\it strict morphism} if
\begin{equation*}
\psi(\eit b)=\eit\psi(b),\,\, \psi( \fit
b)=\fit\psi(b)\quad\text{for all $i\in I$ and $b\in B_1$.}
\end{equation*}
Here, we understand $\psi(0)=0$.
\item $\psi$ is called an {\em embedding} if the underlying
map $\psi\cl B_1\rightarrow B_2$ is injective.
\ee
\end{defi}

We will often use the notation $\wt_{i}(b) = \langle h_i, \wt(b)
\rangle$ $(i \in I, b\in B)$.

\begin{example}
Fix $i\in I$. For any $u \in U_q^{-}(\g)$, there exist unique $v, w
\in U_q^{-}(\g)$ such that
\begin{equation*}
e_i u - u e_i = \dfrac{K_i v - K_i^{-1} w}{q_i - q_i^{-1}}.
\end{equation*}
We define the endomorphism $ e_i' \cl  U_q^{-}(\g) \ra U_q^{-}(\g)$
by $e_i'(u)=w$. Then every $u\in U_q^{-}(\g)$ has a unique {\it
$i$-string decomposition}
\begin{equation*}
u=\sum_{k\ge 0} f_i^{(k)} u_k, \quad \text{where} \ \ e_i' u_k =0 \
\ \text{for all} \ \ k\ge 0,
\end{equation*}
%\cmt{M:changed}
where
$$f_i^{(k)}\seteq\begin{cases}
f_i^{k}/[k]!&\text{if $i$ is real,}\\
f_i^{k}&\text{if $i$ is imaginary.}
\end{cases}$$
The {\it Kashiwara operators} $\eit$, $\fit$ $(i\in I)$ are then
defined by
\begin{equation*}
\eit u = \sum_{k\ge 1} f_i^{(k-1)} u_k, \qquad \fit u = \sum_{k\ge
0} f_i^{(k+1)} u_k.
\end{equation*}

Let $\A_0 = \set{f/g \in \Q(q) }{ f, g \in \Q[q], g(0) \neq 0 }$ and
let $L(\infty)$ be the $\A_0$-submodule of $U_q^{-}(\g)$ generated
by
$$\set{\tilde f_{i_1} \cdots \tilde f_{i_r} \mathbf{1} }{ r \ge 0,
i_k \in I },$$ where $\mathbf{1}$ is the multiplicative identity in
$U_q^{-}(\g)$. Then the set
\begin{equation*}
B(\infty) = \set{\tilde f_{i_1} \cdots \tilde f_{i_r} \mathbf{1} + q
L(\infty) }{ r\ge 0, i_k \in I } \setminus \{0\}
\subset L(\infty)/qL(\infty)
\end{equation*}
%\cmt{M:recovered}
becomes a crystal with the maps $\wt$, $\eit, \fit$, $\vep_i,
\vp_i$ ($i\in I$) defined by
\begin{equation*}
\begin{aligned}
\wt(b) & =- (\alpha_{i_1} + \cdots + \alpha_{i_r}) \quad \text{for}
\ \ b=\tilde f_{i_1} \cdots \tilde f_{i_r}
\mathbf{1} + q L(\infty), \\
\vep_i (b) & =
\begin{cases} \max \set{k\ge 0 }{ \eit^k b \neq 0 }&\text{for $i\in I^\re$,}
\\
0&\text{for $i\in I^\im$,}\end{cases}\\
\vphi_i (b) & = \vep_i(b) + \wt_i(b) \quad (i\in I).
\end{aligned}
\end{equation*}
\end{example}

\begin{example}
For each $i\in I$, let $B_i = \set{b_i(-n) }{ n\ge 0}$. Then $B_i$
is a crystal with the maps defined by
\begin{equation*}
\begin{aligned}
& \wt b_i(-n) = -n \alpha_i, \\
& \eit b_i(-n) = b_i(-n+1), \quad \fit b_i(-n) = b_i(-n-1), \\
& \tilde e_j b_i(-n) = \tilde f_j b_i(-n) = 0 \quad \text{if} \ \
j \neq i,\\
& \vep_i (b_i(-n)) = n, \quad \vphi_i (b_i(-n))=-n \quad \text{if}
\ \ i\in I^\re, \\
& \vep_i (b_i(-n)) = 0, \quad \vphi_i
(b_i(-n))=\wt_i(b_i(-n))=-na_{ii} \quad \text{if}
\ \ i\in I^\im, \\
& \vep_j (b_i(-n)) = \vphi_j (b_i(-n)) = -\infty \quad \text{if} \ j
\neq i.
\end{aligned}
\end{equation*}
Here, we understand $b_i(-n)=0$ for $n<0$. The crystal $B_i$ is
called an {\it elementary crystal}.

\end{example}

For a pair of crystals $B_1$ and $B_2$, their {\it tensor product}
is defined to be the set
$$B_1 \ot B_2 =  \set{b_1\otimes
b_2}{b_1\in B_1, b_2\in B_2},$$ where the crystal structure is
defined as follows: \ The maps $\wt, \vep_i,\vp_i$ are given by
\eqn
\wt(b\otimes b')&=&\wt(b)+\wt(b'),\\
\vep_i(b\otimes b')&=&\max(\vep_i(b),
\vep_i(b')-\wt_i(b)),\\
\vp_i(b\otimes b')&=&\max(\vp_i(b)+\wt_i(b'),\vp_i(b')). \eneqn For
$i\in I$, we define \eqn \fit(b\otimes b')&=&
\begin{cases}\fit b\otimes b'
&\text{if $\vp_i(b)>\vep_i(b')$,}\\
b\otimes \fit b' &\text{if $\vp_i(b)\le \vep_i(b')$,}
\end{cases}
\eneqn For $i\in I^\re$, we define \eqn \eit(b\otimes b')&=&
\begin{cases}\eit b\otimes b'\ &\text{if
$\vp_i(b)\ge \vep_i(b')$,}\\
b\otimes \eit b' &\text{if $\vp_i(b)< \vep_i(b')$,}
\end{cases}
\eneqn and, for $i\in I^\im$, we define \eqn \eit(b\otimes b')&=&
\begin{cases}\eit b\otimes b'\
&\text{if $\vp_i(b)>\vep_i(b')-a_{ii}$,}\\
0&\text{if $\vep_i(b')<\vp_i(b)\le\vep_i(b') -a_{ii}$,}\\
b\otimes \eit b' &\text{if $\vp_i(b)\le\vep_i(b')$.}
\end{cases}
\eneqn

We recall the {\it crystal embedding theorem} proved in
\cite{JKKS}.

\begin{thm} \cite{JKKS}
For each $i\in I$, there exists a unique strict crystal embedding
$$\Psi_i\cl B(\infty)\to B(\infty)\otimes B_i$$
which sends $\mathbf{1}$ to $\mathbf{1} \ot b_i (0)$.
\end{thm}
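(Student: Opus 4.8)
The plan is to exhibit the map $\Psi_i$ explicitly on generators and verify it is a well-defined strict crystal morphism, then deduce injectivity and uniqueness. Recall that $B(\infty)$ consists of classes $\tilde f_{i_1}\cdots\tilde f_{i_r}\mathbf 1 + qL(\infty)$. Fix $i\in I$. For $b\in B(\infty)$ I would first define an integer $m_i(b)\ge 0$ measuring ``how many times $\tilde f_i$ can be peeled off from the left,'' and a residual element $b_0\in B(\infty)$ with no further $\tilde f_i$ extractible, so that $b=\tilde f_i^{(m_i(b))} b_0$ in the appropriate sense. The natural candidate is then
\begin{equation*}
\Psi_i(b)=b_0\otimes b_i(-m_i(b)).
\end{equation*}
The first task is to make $m_i(b)$ and $b_0$ precise. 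In the real case this is the usual $\vep_i^{\,*}$-type invariant coming from the Kashiwara involution, but since that machinery is not stated in the excerpt, I would instead define $\Psi_i$ indirectly: set $\Psi_i(\mathbf 1)=\mathbf 1\otimes b_i(0)$ and extend by the requirement that $\Psi_i$ commute with all $\tilde f_j$. One must then check this recursion is consistent, i.e.\ independent of the chosen word $\tilde f_{i_1}\cdots\tilde f_{i_r}$ representing $b$; this is the crux of well-definedness.

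First I would establish that $\Psi_i$ is a morphism of crystals, namely that it preserves $\wt$, $\vep_j$, $\vphi_j$ and commutes with the lowering operators. Preservation of $\wt$ is immediate from the tensor-product formula $\wt(b_0\otimes b_i(-n))=\wt(b_0)-n\alpha_i$ together with $\wt$-additivity of the $\tilde f_j$. For the statistics $\vep_j,\vphi_j$ and the commutation with $\tilde f_j$ and $\tilde e_j$, the argument splits into the case $j\neq i$, where $b_i(-n)$ is ``transparent'' because $\vep_j(b_i(-n))=\vphi_j(b_i(-n))=-\infty$ for $j\neq i$ forces the tensor rule to act on the left factor, and the case $j=i$, where one must carefully compare the tensor-product Kashiwara operators with the intrinsic ones on $B(\infty)$. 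Here the real and imaginary indices require separate treatment, since Definition~\ref{defi:abstract crystal}(v)--(vi) and the very definition of $\tilde e_i,\tilde f_i$ in the tensor product differ according to whether $i\in I^{\re}$ or $i\in I^{\im}$. Strictness --- that $\Psi_i$ intertwines $\tilde e_i$ as well, not merely $\tilde f_i$ --- then follows from condition~(iv) relating $\tilde e_i$ and $\tilde f_i$ as inverse partial bijections.

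Injectivity is straightforward once the structure is in place: from $\Psi_i(b)=b_0\otimes b_i(-m)$ one recovers $m=m_i(b)$ and $b_0$ as the second and first tensor factors, and $b$ is reconstructed as $\tilde f_i^{(m)}b_0$, so $\Psi_i$ has a left inverse on its image. For uniqueness, I would argue that any strict morphism $\Phi$ sending $\mathbf 1\mapsto\mathbf 1\otimes b_i(0)$ must agree with $\Psi_i$ on every element, since $B(\infty)$ is generated from $\mathbf 1$ by the operators $\tilde f_j$ and strictness forces $\Phi(\tilde f_j b)=\tilde f_j\Phi(b)$, determining $\Phi$ inductively. The main obstacle I anticipate is the well-definedness and the $j=i$ imaginary case: verifying that the recursively defined $\Psi_i$ does not depend on the reduced expression amounts precisely to checking the compatibility of the imaginary-index tensor rule (with its three-way split and the ``$0$'' region $\vep_i(b')<\vphi_i(b)\le\vep_i(b')-a_{ii}$) against the crystal axioms, and this is where the novel features of the generalized Kac-Moody setting --- in particular the term $a_{ii}$ in the $\vphi_i$ shifts --- enter in an essential way.
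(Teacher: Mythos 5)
The theorem you are proving is not actually proved in this paper: it is quoted verbatim from \cite{JKKS}, so the relevant comparison is with the argument given there. Your uniqueness argument is fine --- any crystal morphism commutes with the $\fit[j]$'s where defined, and $B(\infty)$ is generated from $\mathbf{1}$ by them --- and your analysis of which tensor factor the operators act on (transparency of $b_i(-n)$ for $j\neq i$, case split for $j=i$) is the right shape for verifying strictness once the map exists. The problem is existence. Your proposed construction --- set $\Psi_i(\mathbf{1})=\mathbf{1}\ot b_i(0)$ and ``extend by the requirement that $\Psi_i$ commute with all $\tilde f_j$'' --- does not define a map until you show that the result is independent of the word $\tilde f_{i_1}\cdots\tilde f_{i_r}$ representing $b$, and that consistency is precisely the content of the theorem. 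You correctly identify this as ``the crux,'' but you offer no mechanism to resolve it; as written the argument is circular. The invariants $m_i(b)$ and $b_0$ that would make the formula $\Psi_i(b)=b_0\ot b_i(-m_i(b))$ well defined (namely $\vep_i^*(b)$ and $(\eit^*)^{\max}b$, built from the Kashiwara $*$-involution) cannot be extracted from the abstract crystal axioms alone.

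The proof in \cite{JKKS} closes this gap by working at the level of the algebra rather than the combinatorics: every $u\in U_q^-(\g)$ has the unique $i$-string decomposition $u=\sum_{k\ge 0}f_i^{(k)}u_k$ with $e_i'u_k=0$ (this is already displayed in Section~1 of the present paper), and the assignment $u\mapsto\sum_k u_k\ot f_i^{(k)}$ defines a linear embedding $U_q^-(\g)\to U_q^-(\g)\ot\Q(q)[f_i]$. One then checks that this map is compatible with the Kashiwara operators and carries the crystal lattice $L(\infty)$ into $L(\infty)\ot L_i$, so it descends to the desired strict embedding of crystals; well-definedness is automatic because the map is defined on actual elements of $U_q^-(\g)$, not on words. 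A secondary inaccuracy in your sketch: the reconstruction of $b$ from $\Psi_i(b)$ is $b=(\fit^*)^{m}b_0$ (right multiplication by $f_i$), not $b=\tilde f_i^{(m)}b_0$; the peeling is with respect to the starred operators, and conflating $\fit$ with $\fit^*$ would break both the injectivity argument and the $j=i$ case of the intertwining check.
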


As an application of the crystal embedding theorem, we obtain a
characterization of the crystal $B(\infty)$.

\begin{thm} \label{thm:B(infty)} \cite{JKKS}
Let $B$ be a crystal. Suppose that $B$ satisfies the following
conditions{\rm :} \bnum
\item  $\wt(B)\subset -Q_{+}$,

\item there exists an element $b_0\in B$ such that
$\wt(b_0)=0$,

%\item $\eit b_0=0$ for all $i\in I$,

\item for any $b\in B$ such that $b\neq b_0$, there exists
some $i\in I$ such that $\eit b\neq 0$,

\item for each $i \in I$, there exists a strict crystal embedding
$\Psi_i\cl B\rightarrow B\otimes B_i$. \ee

Then there is a crystal isomorphism
$$B\isoto B(\infty),$$
which sends $b_0$ to $\mathbf{1}$.
\end{thm}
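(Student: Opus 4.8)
The plan is to characterize $B(\infty)$ by the universal property of the crystal embeddings $\Psi_i$ and then show any crystal $B$ satisfying (i)--(iv) must map isomorphically onto it. The key observation is that condition (iii) guarantees that every element of $B$ is obtained from $b_0$ by applying a sequence of Kashiwara operators $\fit$; indeed, starting from any $b \neq b_0$ and repeatedly applying the $\eit$ provided by (iii), the weight increases by some $\alpha_i \in Q_+$ at each step, and since $\wt(B) \subset -Q_+$ by (i), this process must terminate after finitely many steps, necessarily at the unique weight-zero element $b_0$ (uniqueness of which follows from (ii) combined with the fact that only $b_0$ admits no $\eit b \neq 0$). Hence $B$ is connected and generated by $b_0$ under the lowering operators.

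The main construction is to build the isomorphism $\Phi \cl B(\infty) \to B$ by induction on the height $|\wt(b)|$. First I would define $\Phi(\mathbf{1}) = b_0$, which is forced by the weight condition. For the inductive step, given $b = \fit b' \in B(\infty)$ with $b'$ of smaller height, I want to set $\Phi(\fit b') = \fit \Phi(b')$. To see this is well-defined, I would use the embeddings on \emph{both} sides: the canonical embedding $\Psi_i^{B(\infty)} \cl B(\infty) \to B(\infty) \otimes B_i$ from the crystal embedding theorem, and the given embedding $\Psi_i \cl B \to B \otimes B_i$ from (iv). The strategy is to show that these two families of embeddings let one track each element through its "$i$-string data," so that the recursive definition of $\Phi$ respects the tensor-product Kashiwara operator rules; compatibility of $\Phi$ with $\eit$ then follows from Definition~\ref{def:mor}\,(iv), which couples $\eit$ and $\fit$.

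Concretely, I would argue as follows. Since $\Psi_i$ is strict, $B$ decomposes, as an abstract $i$-crystal, into the components dictated by the tensor factor $B_i$; the same holds for $B(\infty)$ via $\Psi_i^{B(\infty)}$. The uniqueness clause in the crystal embedding theorem is the engine: it says $\Psi_i^{B(\infty)}$ is the \emph{only} strict embedding sending $\mathbf{1} \mapsto \mathbf{1} \otimes b_i(0)$. I would exploit this to show that $\Phi \otimes \id_{B_i}$ intertwines $\Psi_i^{B(\infty)}$ and $\Psi_i$, i.e.\ that the square
\begin{equation*}
\begin{CD}
B(\infty) @>{\Psi_i^{B(\infty)}}>> B(\infty)\otimes B_i\\
@V{\Phi}VV @VV{\Phi\otimes\id}V\\
B @>{\Psi_i}>> B\otimes B_i
\end{CD}
\end{equation*}
commutes, from which both well-definedness and strictness of $\Phi$ follow by comparing the $i$-string decompositions induced by the two embeddings. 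Injectivity is then forced: if $\Phi(b) = \Phi(\tilde b)$, tracking back along the $\eit$ operators using (iii) and the commuting square shows $b = \tilde b$, while surjectivity is immediate from connectivity of $B$ established above.

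I expect the principal obstacle to lie precisely in verifying that the recursion $\Phi(\fit b') = \fit \Phi(b')$ is consistent, since a given element of $B(\infty)$ may be written as $\fit b'$ in several ways for different $i$, and one must check all these presentations yield the same image in $B$. The resolution is to feed everything through the two embeddings simultaneously and invoke the uniqueness in the crystal embedding theorem, which pins down the $i$-string structure on both crystals rigidly enough that the commuting square above has no freedom; the subtlety is that for imaginary $i \in I^{\im}$ the tensor-product rule for $\eit$ has the extra "annihilation" case (when $\vep_i(b') < \vp_i(b) \le \vep_i(b') - a_{ii}$), so I would need to check the compatibility separately in the real and imaginary cases, ensuring the map $\Phi$ respects $\vep_i$ and $\vphi_i$ in both regimes.
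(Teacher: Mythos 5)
First, a point of comparison: the paper does not prove this theorem at all --- it is imported verbatim from \cite{JKKS} --- so there is no in-paper argument to measure your proposal against; I can only judge it against what a complete proof must contain.

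Your overall shape is the standard one, and your preliminary steps are sound: conditions (i) and (iii) do force the $\eit$-raising process to terminate at the unique weight-zero element, so $B$ is connected and generated from $b_0$ by the $\fit$. The genuine gap sits exactly where you place the ``principal obstacle'': the well-definedness of the recursion $\Phi(\fit b')=\fit\Phi(b')$, i.e.\ that $\fit b'=\fjt b''$ forces $\fit\Phi(b')=\fjt\Phi(b'')$. Your proposed resolution --- that the square intertwining $\Psi_i^{B(\infty)}$ and $\Psi_i$ via $\Phi\otimes\id$ commutes ``by the uniqueness in the crystal embedding theorem'' --- does not close it, for two reasons. First, that uniqueness clause concerns only strict embeddings $B(\infty)\to B(\infty)\otimes B_i$; hypothesis (iv) asserts existence but not uniqueness of $\Psi_i\cl B\to B\otimes B_i$, and since $B$ has not yet been identified with $B(\infty)$, no uniqueness statement is available on the bottom row of your square. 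Second, and more seriously, asserting $(\Phi\otimes\id)\circ\Psi_i^{B(\infty)}=\Psi_i\circ\Phi$ presupposes that $\Phi$ is already a globally defined strict morphism, which is precisely what the recursion is trying to produce; as written the argument is circular.

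What a complete proof needs instead is an induction on the height of $-\wt(b)$ in which the embeddings transport the problem to strictly smaller height: for $b$ with $\wt(b)=-\alpha\neq 0$ one writes $\Psi_i(b)=b'\otimes b_i(-c)$, shows that a suitable choice of $i$ gives $c>0$ (so $b'$ has smaller height), characterizes the image of $\Psi_i$ well enough to recover $b$ from $(b',c)$ and to compute every $\ejt,\fjt$ on $b$ from their inductively known values on $b'$ via the tensor product rule, and then verifies that different admissible choices of $i$ yield the same answer. That last consistency check --- which must indeed be done separately for real and imaginary $i$ because of the extra annihilation case in the tensor rule for $\eit$ with $i\in I^{\im}$, as you anticipate --- is the real content of the theorem, and it is asserted rather than supplied in your proposal.
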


\vskip 1cm

%%%%%%%%%%%%%%%%%%%%%%%%%%%%%%%%%%%%%%%%%%%%%%%%%%%%%%%%%%%%%%%%%%%%%%%%%%%%%%%%%%%%%%%%%%%%%%%%%%%

\section{Quiver variety}

\vskip 3mm
%\cmt{M:chnage}
Let $(I, H)$ be a quiver with an orientation $\Omega$.  Namely, we have maps
$$\out, \sink\cl H\to I$$ and an involution $-$ of $H$ such that
$\out(\ol{h})=\sink(h)$ for any $h\in H$.
%an edge, then $\overline{h}$ will denote the opposite edge.  We
%allow edge loops in $H$, but these will always come in pairs $(h,
%\overline{h})$.
We assume that $-$ does not have a fixed point, and
$H=\Omega \sqcup \overline{\Omega}$. If $i=\out(h)$ and $j=\sink(h)$, then we
say that $h$ is an arrow from $i$ to $j$ and write $h\cl i\to j$.
If $h\in H$ satisfies $\out(h)=\sink(h)$, then we say that $h$ is a loop.
We denote
by $H^{loop}$ (resp.\ $\Omega^{loop}$) the set of all loops in $H$
(resp.\ in $\Omega$). Let $c_{ij}$ denote the number of arrows in $H$
from $i$ to $j$, and define
\begin{equation*}
a_{ij}=\begin{cases} 2- c_{ii} = 2 - (\text{the number of loops at
$i$ in $H$})
\ \ & \text{if} \ \ i=j, \\
 - c_{ij} = -(\text{the number of arrows in $H$ from $i$ to $j$}) \
\ & \text{if} \ \ i \neq j.
\end{cases}
\end{equation*}
Then $A=(a_{ij})_{i,j \in I}$ becomes a symmetric even integral
Borcherds-Cartan matrix.

For $\alpha \in Q_{+}$, let $V_{\alpha}=\bigoplus_{i\in I} V_{i}$ be an
$I$-graded vector space with
$$\underline{\dim} \, V_{\alpha}\seteq \sum_{i\in I}
(\dim V_i) \alpha_i = \alpha,$$ let $GL_{\alpha}=\prod_i GL(V_{i})$,
and set $$X_{\alpha} = \bigoplus_{h\in
H}\Hom(V_{\out(h)},V_{\sink(h)}).$$

We introduce a symmetric bilinear form $( \ , \ )$ on $Q$
%the set of dimension vectors $\mathbb{Z}^I$
by the formula
$$\big( \sum d_k \alpha_k, \sum e_j \alpha_j \big) =\sum_k d_ke_k.$$
Thus $\dim X_{\alpha}=\left( (2 \Id-A) \cdot \alpha, \alpha \right)$.

The symplectic form $\omega$ on $X_{\alpha}$ is given by %\cmt{M:changed}
$$\omega(B,B')=\sum_h \varepsilon(h) Tr(B_{\oh}B'_h),$$
where
$$\varepsilon(h)=\begin{cases} 1 & \text{if} \; h\in \Omega, \\
-1 & \text{if\;}  h\in \overline{\Omega}.
\end{cases}$$

Consider the {\it moment map}
$\mu=(\mu_i \cl X_{\alpha} \rightarrow \mathrm{End}\; V_i)_{i\in I}$ given
by
$$\mu_i(B) = \sum_{\substack{h\in H \\ \out(h)=i}} \varepsilon(h)
B_{\overline{h}} B_{h}.$$

Let $X_{\alpha}^{\circ}$ denote the set of $B$'s
such that $B_{h}$ is regular semisimple for all $h \in
\overline{\Omega}^{loop}$.
Then $X_{\alpha}^{\circ}$ is a Zariski open subset of
$X_\alpha$.
Let $\N_{\alpha}$ be the variety consisting of all $B=(B_{h})_{h\in
H} \in X_{\alpha}$ satisfying the following three conditions:
\begin{itemize}
\item[(i)] there exists an $I$-graded complete flag ${F}=(F_0 \subset
F_1 \subset F_2 \subset \cdots)$ such that
$$B_{h}(F_k) \subset
F_{k} \ \ \text{for all} \ h\in \overline{\Omega}^{loop}, \ \
B_{h}(F_{k}) \subset F_{k-1} \ \ \text{for all} \ h\in H \setminus
\overline{\Omega}^{loop},$$

\item[(ii)] $\mu_i(B)=0$ for all $i\in I$,

\item[(iii)] %$B_{h}$ is regular semisimple for all $h \in\ol{\Omega}^{loop}$.
$B\in X_{\alpha}^{\circ}$.
\end{itemize}
Then $\N_{\alpha}$ is a Zariski closed subvariety of
$X_{\alpha}^{\circ}$.

\vspace{.1in}

We first prove:

\begin{lem} \label{lem:isotropic}
The variety $\N_{\alpha}$ is isotropic.
\end{lem}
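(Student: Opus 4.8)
The plan is to show that $\N_\alpha$ is isotropic by producing, at a generic (smooth) point of each irreducible component, a coisotropic subspace of the tangent space containing the tangent space to $\N_\alpha$, and comparing dimensions. The natural coisotropic object here is the fiber of the moment map: since $\mu$ is a moment map for the $GL_\alpha$-action, condition (ii) confines $\N_\alpha$ to the level set $\mu^{-1}(0)$, and the tangent space to $\mu^{-1}(0)$ at a point $B$ is the symplectic-orthogonal (with respect to $\omega$) of the tangent space to the $GL_\alpha$-orbit of $B$. Concretely, I would first recall the standard symplectic-reduction fact that $d\mu_B$ has image pairing with the Lie algebra $\mathfrak{gl}_\alpha = \bigoplus_i \mathrm{End}\,V_i$ so that $\ker d\mu_B = (T_B(GL_\alpha\cdot B))^{\perp_\omega}$, and hence $T_B\mu^{-1}(0)$ is coisotropic whenever $B\in\mu^{-1}(0)$.

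Next I would bound the dimension of $\N_\alpha$ from above by $\tfrac12\dim X_\alpha^\circ$ (equivalently, half of $\dim X_\alpha$, since $X_\alpha^\circ$ is open dense). The flag condition (i) is the key constraint: fixing a complete $I$-graded flag $F$, the set of $B$ compatible with $F$ in the prescribed way (preserving $F_k$ along the loops in $\overline\Omega^{loop}$, lowering the flag degree along all other arrows) is a vector subspace of $X_\alpha$ whose dimension I would compute explicitly by counting, arrow by arrow, the dimension of the space of linear maps respecting the flag filtration. Letting the flag $F$ vary over the (finite-dimensional, projective) variety of complete $I$-graded flags and accounting for the $GL_\alpha$-orbit of flags, I would assemble these fiber dimensions with the dimension of the flag variety to get an upper bound on $\dim\N_\alpha$. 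The arithmetic should yield $\dim\N_\alpha \le \tfrac12\dim X_\alpha$; this is the analogue of Lusztig's computation in \cite{Lus91} and Kashiwara--Saito \cite{KS97}, and the loop arrows in $\overline\Omega^{loop}$ (which preserve rather than lower the flag) contribute exactly the extra terms needed to match the nonstandard diagonal entries $a_{ii}$ of the Borcherds--Cartan matrix.

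Finally I would combine the two halves. On each irreducible component $Z$ of $\N_\alpha$, at a generic smooth point $B$ we have $T_B Z \subset T_B\mu^{-1}(0)$, and $T_B\mu^{-1}(0)$ is coisotropic, so the restriction of $\omega$ to $T_B Z$ has radical containing $(T_B\mu^{-1}(0))^{\perp_\omega} = T_B(GL_\alpha\cdot B)$. To conclude that $\omega|_{T_B Z}=0$ it suffices to know $\dim Z \le \tfrac12\dim X_\alpha$ together with the fact that an isotropic-or-coisotropic comparison forces vanishing: since $T_B Z$ sits inside a coisotropic subspace and has dimension at most half the ambient, I would check that $T_B Z$ is in fact isotropic by the standard dimension argument (a subspace $W$ of a symplectic space with $\dim W \le \tfrac12\dim$ that lies in a coisotropic $C$ with $C^{\perp_\omega}\subset W$ must be isotropic). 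The main obstacle, and the step requiring genuine care rather than formal manipulation, is the dimension count for the flag-compatible subspaces: one must verify that the contribution of the regular-semisimple loop arrows — which is where this construction departs from \cite{KS97} — is handled correctly so that the bound comes out to exactly $\tfrac12\dim X_\alpha$, neither more nor less.
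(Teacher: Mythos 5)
Your proposal diverges from the paper's argument and contains two genuine gaps, either of which is fatal as written. The first is the concluding ``standard dimension argument'': it is false that a subspace $W$ of a symplectic vector space with $\dim W\le\tfrac12\dim$, contained in a coisotropic subspace $C$ with $C^{\perp_\omega}\subset W$, must be isotropic. Take $C$ to be the whole space (so $C^{\perp_\omega}=0$) and $W$ a direct sum of symplectic planes of half the total dimension; more generally, your hypotheses only say that $W/C^{\perp_\omega}$ is at most half-dimensional in the symplectic reduction $C/C^{\perp_\omega}$, and half-dimensional subspaces of a symplectic space need not be isotropic. Coisotropy of $\mu^{-1}(0)$ plus an upper bound on dimension can never yield isotropy; the logical order has to be the reverse, and indeed in the paper isotropy is proved first (this Lemma, giving $\dim\N_\alpha\le\tfrac12\dim X_\alpha$) and the equality of dimensions only comes later, in Corollary~\ref{C:Lag}, via the fibrations of Proposition~\ref{P:dim}. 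The second gap is the dimension count itself: for a \emph{fixed} $I$-graded complete flag $F$, the space of $B$ satisfying condition (i) has dimension exactly $\tfrac12\dim X_\alpha$ (each pair $\{h,\oh\}$ contributes exactly half of its $2d_{\out(h)}d_{\sink(h)}$ coordinates, the loop pairs included), so letting $F$ vary over the flag variety $\mathfrak{B}$ produces an incidence variety of dimension $\tfrac12\dim X_\alpha+\dim\mathfrak{B}$, whose image in $X_\alpha$ is a priori that large. The missing $\dim\mathfrak{B}$ must be recovered from the moment-map condition (ii), which your count never uses; making that interaction precise is exactly the non-formal content of the lemma.

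The paper handles both points at once with a single general fact of microlocal geometry: for a smooth closed $Z\subset X\times Y$ with $Y$ projective, the image of $T^*_Z(X\times Y)\cap(T^*X\times T^*_YY)$ under the projection to $T^*X$ is isotropic (\cite[Proposition~8.3]{KS}). Applying this with $X=\mathcal{M}_\alpha$, $Y=\mathfrak{B}$, and $Z$ the incidence variety of condition (i), one computes that the image of the projection is precisely the set of $B$ satisfying (i) and (ii); the conormal-variety formalism automatically encodes the moment-map equation (via $K=\sum\eps(h)B_hB_{\oh}$) and delivers isotropy without any dimension comparison. Since $\N_\alpha$ is contained in that image, it is isotropic. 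If you want to repair your approach, you would need to replace the final step by an argument of this conormal/Lagrangian-correspondence type (as in \cite{KS97}); the symplectic-reduction observation about $\ker d\mu_B$ is true but cannot carry the proof.
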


\begin{proof}
We first recall the following general fact. Let $X$ be a smooth
algebraic variety, $Y$ a projective variety, and $Z$ a smooth closed
algebraic subvariety of $X \times Y$. Consider the Lagrangian
variety $\Lambda = T_{Z}^{*}(X \times Y)$ and the projection map $q\cl
\Lambda \cap (T^{*}X \times T_{Y}^{*}Y) \rightarrow T^{*}X$. Then it
is known that the image of $q$ is isotropic (e.g.\ see
\cite[Proposition 8.3]{KS}).

We apply this fact to the case where $X=\mathcal{M}_{\alpha}
\seteq\prod_{h\in\Omega}\Hom(V_{\out(h)},V_{\sink(h)})$, the
moduli space of representations of the quiver $(I,\Omega)$,
$Y=\mathfrak{B}$, the
variety of $I$-graded complete flags, and
$$Z=\set{((B_{h})_{h\in \Omega}, {F})\in X\times Y}{B_{h} F_{k} \subset
F_{k -1} \ \ \text{for all} \ k \ge 0 }.$$ Then
\begin{equation*}
\begin{aligned}
T^{*}\mathfrak{B} = & \{(F, K)\, ; \,F\in \mathfrak{B}, \ \text{$K$
is
an $I$-graded endomorphism of $V$} \\
&\hs{20ex} \text{such that $K(F_k) \subset F_{k-1}$
for all $k\ge 0$} \}, \\
T^{*} \mathcal{M}_{\alpha}  = & X_{\alpha}, \\
\Lambda  = & \{(B,F,K)\;;\; K = \sum \eps(h)B_hB_{\oh},\\
&\hs{15ex}B_{h} F_{k} \subset F_{k-1},\,B_{\oh} F_{k} \subset
F_{k} \ \ \text{for all} \ h\in \Omega, k \ge 0\}
\end{aligned}
\end{equation*}
and \begin{equation*} \begin{aligned} \mathrm{Im} \, q =  \{
B=(B_{h})_{h\in H} \;;\; &\text{there exists ${F} \in
\mathfrak{B}$ such that $B_{h} (F_k) \subset F_{k-1}$, $B_{\oh}
(F_{k}) \subset F_{k}$} \\
&\; \text{for all $h\in \Omega, k \ge 0$, $\mu_i(B)=0$ for all $i\in
I$}\}. \end{aligned}
\end{equation*}
 Since our variety $\N_{\alpha}$
is contained in $\mathrm{Im} \, q$, which is isotropic, we are done.
\end{proof}

Later we will show that $\N_{\alpha}$ is Lagrangian for all
$\alpha$. For the moment, we consider a simple case~:

\vspace{.1in}

\begin{lem}\label{L:la} For any $l \geq 1$ and $i \in I$,
the variety $\N_{l\alpha_i}$ is Lagrangian and irreducible.
\end{lem}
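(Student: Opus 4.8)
The plan is to reduce the statement to an explicit linear-algebra problem and then realize $\N_{l\alpha_i}$ as the image of a smooth irreducible incidence variety under a morphism with finite fibers. Since $V_{l\alpha_i}$ is concentrated at the single vertex $i$, the only arrows contributing to $X_{l\alpha_i}$ are the loops at $i$; writing $l=\dim V_i$ and letting $m$ denote the number of loops at $i$ lying in $\Omega$ (so that $c_{ii}=2m$ and $a_{ii}=2-2m$), I identify $X_{l\alpha_i}=\bigoplus_{s=1}^m\bigl(\mathrm{End}(V_i)\oplus\mathrm{End}(V_i)\bigr)$ with coordinates $B_s=B_{h_s}$, $C_s=B_{\overline{h}_s}$ for the loops $h_s\in\Omega$. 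A direct computation gives $\mu_i(B)=\sum_{s=1}^m(C_sB_s-B_sC_s)$; condition (iii) says each $C_s$ is regular semisimple, and condition (i) becomes the existence of a complete flag $F$ with $C_s(F_k)\subset F_k$ and $B_s(F_k)\subset F_{k-1}$ for all $s,k$. When $m=0$ (the real case) $X_{l\alpha_i}=0$ and $\N_{l\alpha_i}$ is a point, so I assume $m\ge 1$.

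Next I introduce the incidence variety $\widehat\N\subset X_{l\alpha_i}\times\mathfrak B$ of pairs $(B,F)$ satisfying (i)--(iii), with its two projections $\pi\colon\widehat\N\to X_{l\alpha_i}$ and $\rho\colon\widehat\N\to\mathfrak B$. By construction $\pi(\widehat\N)=\N_{l\alpha_i}$: a point of $\widehat\N$ yields a point of $\N_{l\alpha_i}$ together with a witnessing flag, and condition (i) provides a flag for every point of $\N_{l\alpha_i}$. Fixing the standard flag $F^0$ with Borel stabilizer $B\subset GL(V_i)$, the fiber $\rho^{-1}(F^0)$ is
\[
\mathcal F=\Bigl\{(B_s,C_s)_s\;:\;C_s\in\mathfrak b^{rs},\ B_s\in\mathfrak n,\ \textstyle\sum_{s}(C_sB_s-B_sC_s)=0\Bigr\},
\]
where $\mathfrak b$ (resp.\ $\mathfrak n$) is the algebra of upper-triangular (resp.\ strictly upper-triangular) endomorphisms and $\mathfrak b^{rs}\subset\mathfrak b$ is the open locus of matrices with pairwise distinct diagonal entries. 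Since $\rho$ is $GL(V_i)$-equivariant over the homogeneous space $\mathfrak B=GL(V_i)/B$, one has $\widehat\N\cong GL(V_i)\times_B\mathcal F$, a locally trivial fibration over $\mathfrak B$ with fiber $\mathcal F$.

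The crux is to show that $\mathcal F$ is irreducible of the expected dimension. I would first establish the key fact that for any $C\in\mathfrak b^{rs}$ the operator $\mathrm{ad}_C\colon\mathfrak n\to\mathfrak n$ is an isomorphism: decomposing $\mathfrak n=\bigoplus_{d\ge1}\mathfrak n_d$ by distance from the diagonal, $\mathrm{ad}_{C^{\mathrm{diag}}}$ preserves each $\mathfrak n_d$ and acts invertibly there (its eigenvalues are differences of distinct diagonal entries), while $\mathrm{ad}_{C-C^{\mathrm{diag}}}$ strictly raises $d$; hence $\mathrm{ad}_C$ is invertible on $\mathfrak n$. Consequently the linear map $(B_s)_s\mapsto\sum_s(C_sB_s-B_sC_s)$, $\mathfrak n^{\oplus m}\to\mathfrak n$, is surjective for every $(C_s)\in(\mathfrak b^{rs})^m$ (take $B_2=\cdots=B_m=0$ and invert $\mathrm{ad}_{C_1}$). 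Its kernel thus has constant dimension $(m-1)\binom{l}{2}$, so $\mathcal F\to(\mathfrak b^{rs})^m$ is a vector bundle over the irreducible base $(\mathfrak b^{rs})^m$; hence $\mathcal F$ is irreducible and smooth with $\dim\mathcal F=m\binom{l+1}{2}+(m-1)\binom{l}{2}=ml^2-\binom{l}{2}$. It follows that $\widehat\N$ is irreducible with $\dim\widehat\N=\dim\mathfrak B+\dim\mathcal F=\binom{l}{2}+\bigl(ml^2-\binom{l}{2}\bigr)=ml^2$.

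Finally I conclude through $\pi$. Every flag in a fiber $\pi^{-1}(B)$ is preserved by the regular semisimple operator $C_1$, and a regular semisimple operator admits only $l!$ invariant complete flags; hence $\pi$ has finite fibers. As $\widehat\N$ is irreducible and $\N_{l\alpha_i}=\pi(\widehat\N)$ is Zariski closed, $\N_{l\alpha_i}$ is irreducible with $\dim\N_{l\alpha_i}=\dim\widehat\N=ml^2=\tfrac12\dim X_{l\alpha_i}$. Combined with Lemma~\ref{lem:isotropic} (which gives isotropy, whence $\dim\N_{l\alpha_i}\le\tfrac12\dim X_{l\alpha_i}$), this shows $\N_{l\alpha_i}$ is Lagrangian. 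The main obstacle is the surjectivity/constant-rank statement for the moment-map differential: it is precisely what upgrades $\mathcal F$ from a possibly reducible degeneracy locus into a vector bundle, and hence what forces both irreducibility and the exact half-dimensionality.
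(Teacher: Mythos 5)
Your proof is correct and follows essentially the same route as the paper's: both realize $\N_{l\alpha_i}$ (for $c_{ii}>0$) as the image of an incidence variety fibered over the flag variety whose fiber over a fixed Borel is a vector bundle over $(\mathfrak b^{rs})^{c_{ii}/2}$, the key input in each case being that $\mathrm{ad}(C)$ is invertible on $\mathfrak n$ for $C$ regular semisimple. The only minor (and arguably cleaner) deviations are that you treat $c_{ii}=2$ and $c_{ii}>2$ uniformly, and that you deduce $\dim \N_{l\alpha_i}=\dim\widehat{\N}$ from finiteness of $\pi$ (a regular semisimple operator preserves at most $l!$ complete flags) rather than via the paper's dense open subset $Y^{\circ}$ of regular-nilpotent tuples on which $Y\to\N_{l\alpha_i}$ is injective.
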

\begin{proof} We have to prove that $\N_{l\alpha_i}$ is irreducible
and $\dim(\N_{l\alpha_i})=\frac{1}{2}
\dim(X_{\alpha})$.

If $i$ is a real vertex, then
$\N_{l\alpha_i}=X_{l\alpha_i}=\{pt\}$.

Assume now that $i$ is an imaginary vertex and that $c_{ii}=2$. Then
$\Omega_i^{loop}=\{h\}$ and the moment map condition (ii) implies
$[B_{\oh},B_{h}]=0$. Note that
$|\Omega_i^{loop}|=\frac{1}{2}c_{ii}$. Since $B_{\oh}$ is regular
semisimple and $B_{h}$ is nilpotent, $B_h=0$. Therefore
$\N_{l\alpha_i} \simeq \mathfrak{gl}(l)^{reg}$, the set of regular
semisimple elements of $\mathfrak{gl}(l)$. In particular,
$\N_{l\alpha_i}$ is irreducible and of dimension $l^2=\frac{1}{2}
\dim X_{l\alpha_i}$.

%Finally, consider the case where $c_{ii}>2$,
%and let $\N_{l\alpha_i}^{\circ}$ be the open subset of
%$\N_{l\alpha_i}$ consisting of elements $B=(B_h)_h$ for which there
%exists a unique Borel subalgebra $\mathfrak{b} \subset
%\mathfrak{gl}(l)$ containing all the $B_h$'s. Since we may deform a
%pair $(B_h, B_{\oh})$ by keeping $[B_{\oh},B_h]$ fixed,
%$\N_{l\alpha_i}^{\circ}$ is dense in $\N_{l\alpha_i}$. Let
%$\mathfrak{B}$ be the flag variety of $\mathfrak{gl}(l)$. There is a
%natural locally trivial fibration $\pi: \N_{l\alpha_i}^{\circ} \to
%\mathfrak{B}$ whose fiber at a point $\mathfrak{b}$ is open in
%$$Y_{\mathfrak{b}}=\big\{(B_h, B_{\oh})_h\;|\; B_{\oh} \in \mathfrak{b}^{reg},
%B_h \in \mathfrak{n}, \sum_h [B_{\oh},B_h]=0\big\},$$ where
%$\mathfrak{b}^{reg}$ is the set of regular semisimple elements in
%$\mathfrak{b}$ and $\mathfrak{n}=[\mathfrak{b},\mathfrak{b}]$.
%Because each $B_{\oh}$ is regular, $\mathrm{ad}\,B_{\oh}\cl \mathfrak{n} \to
%\mathfrak{n}$ is invertible. It follows that $Y_{\mathfrak{b}}$ is a
%vector bundle of rank $(\frac{1}{2}c_{ii}-1)\dim \mathfrak{n}$ over
%$(\mathfrak{b}^{reg})^{\frac{1}{2}c_{ii}}$. Hence $Y_{\mathfrak{b}}$
%is irreducible and of dimension $\frac{1}{2}c_{ii}\dim \mathfrak{b}
%+ (\frac{1}{2}c_{ii}-1)\dim
%\mathfrak{n}=\frac{1}{2}(l^2c_{ii}-l(l-1))$. Therefore
%$\N_{l\alpha_i}^{\circ}$ (and hence $\N_{l\alpha}$) is irreducible,
%of dimension $\frac{1}{2}(l^2c_{ii}-l(l-1)) + \dim
%\mathfrak{B}=\frac{1}{2}l^2c_{ii}=\frac{1}{2} \dim X_{l\alpha_i}$.

Finally, consider the case where $c_{ii}>2$, let $\mathfrak{B}$ be
the flag variety of $\mathfrak{gl}(l)$ and let
$$Y=\set{((B_{h}, B_{\oh})_{h\in \Omega}, \mathfrak{b})}
{ B_{\oh} \in \mathfrak{b}^{reg}, B_{h} \in \mathfrak{n},
\sum_{h\in\Omega} [B_{\oh},B_h]=0},$$
where $\mathfrak{b}$ is a Borel subalgebra containing all $B_{h}$,
$B_{\oh}$'s, $\mathfrak{b}^{reg}$ is the set of regular semisimple
elements in $\mathfrak{b}$ and
$\mathfrak{n}=[\mathfrak{b},\mathfrak{b}]$. Set
$$Y^{\circ} = \set{((B_{h}, B_{\oh})_{h\in \Omega}, \mathfrak{b}) \in Y }{
\text{$B_{h}$'s are regular nilpotent for all $h \in \Omega$} }.$$
Then $Y^{\circ}$ is an open dense subset of $Y$. Let $\pi\cl Y \to
\mathfrak{B}$ be the natural projection. Because each $B_{\oh}$ is
regular simple, $\mathrm{ad}(B_{\oh})\cl \mathfrak{n} \to \mathfrak{n}$ is
invertible. It follows that $\pi^{-1} (\mathfrak{b})$ is a vector
bundle over $(\mathfrak{b}^{reg})^{c_{ii}/2}$ of rank
$(c_{ii}/2-1)\dim \mathfrak{n}$. Hence
$\pi^{-1}(\mathfrak{b})$ is irreducible and of dimension
$\frac{1}{2}c_{ii}\dim \mathfrak{b} + (\frac{1}{2}c_{ii}-1)\dim
\mathfrak{n}=\frac{1}{2}(l^2c_{ii}-l(l-1))$. Therefore, $Y^{\circ}$
is irreducible, of dimension $\frac{1}{2}(l^2c_{ii}-l(l-1)) + \dim
\mathfrak{B}=\frac{1}{2}l^2c_{ii}=\frac{1}{2} \dim X_{l\alpha_i}$.
Since there is a natural finite surjective map $Y \rightarrow \N_{l
\alpha_i}$ whose restriction to $Y^{\circ}$ is injective,
$Y^{\circ}$ can be regarded as an open dense subset of $\N_{l
\alpha_i}$. Hence $\N_{l \alpha_i}$ is irreducible of dimension
$\frac{1}{2} \dim X_{l \alpha_i}$ as desired.
\end{proof}

\vspace{.1in}

Next, we introduce stratifications of $\N_{\alpha}$ (one for each $i
\in I$) as follows. Fix $i\in I$ and let $t$ be the number of loops
at $i$ in $\Omega$. Let $\R=\C\langle x_1, \ldots, x_t, y_1, \ldots,
y_t \rangle$ be the free associative algebra generated by $x_i, y_i$
$(i=1, \ldots, t)$. Write $\Omega_i^{loop}=\{\sigma_1, \ldots,
\sigma_t\}$.

For $B=(B_{h})_{h\in H} \in \N_{\alpha}$ and $f\in \R$, we define
\begin{equation*}
\begin{aligned}
f(B)& = f(B_{\sigma_1}, \ldots, B_{\sigma_t},
B_{\overline{\sigma_1}}, \ldots, B_{\overline{\sigma_t}}), \\
%\C\langle B \rangle & =
\C\langle B \rangle_{i}& = \set{ f(B)}{f \in \R }, \\
\varepsilon_i (B) &= \mathrm{codim}_{V_i} \Bigl(\C \langle B \rangle_i
\cdot \sum_{\substack{h\cl j\rightarrow i \\ j \neq i}} \Im
B_{h}\Bigr).
\end{aligned}
\end{equation*}
We put $\N_{\alpha,n,i}=\set{B \in \N_{\alpha}}{
\varepsilon_i(B)=n}$. It is clear that this defines a finite
stratification $\N_{\alpha}=\bigsqcup_{n \geq 0} \N_{\alpha,n,i}$
into locally closed subsets.

\vspace{.1in}

We choose an identification $V_i \overset{\sim} \longrightarrow
V_{i}^*$ and for $B\in X_\alpha$,
we define $B^*\in X_\alpha$ by $(B^*)_h = B_{\oh}^t\cl V_{\out(h)} \rightarrow
V_{\sink(h)}$ for $h\in H$. The map $B \mapsto B^*$ defines an involution
on $X_{\alpha}$ and $\N_{\alpha}$.
Set
\begin{equation*}
\varepsilon_i^*(B) = \varepsilon_i(B^*) = \mathrm{codim}_{V_i}\Bigl(
\C \langle B^t \rangle_i\cdot \sum_{\substack{h\cl i \rightarrow j \\ i
\neq j}} \Im B_{h}^t\Bigr) = \dim \bigcap_{\substack{h\cl i\rightarrow
j \\ i \neq j}} \mathrm{Ker} (B_{h} \cdot \C\langle B \rangle_i).
\end{equation*}
Here $\mathrm{Ker} (B_{h} \cdot \C\langle B \rangle_i)=
\set{v\in V_i}{B_{h} \cdot \C\langle B \rangle_iv=0}$.
Let $\Irr\, \N_{\alpha}$ denote the set of all irreducible
components of $\N_{\alpha}$. Of course, the $*$-involution induces
an involution on $\Irr\,\N_{\alpha}$ as well.
It does not depend on the choice of an isomorphism $V\simeq V^*$,
because $\N_\alpha$
is invariant by the action of $GL_\alpha$. %\cmt{M:added}

For $\Lambda \in
\Irr\, \N_{\alpha}$, we define
\begin{equation*}
\varepsilon_i(\Lambda) = \varepsilon_i(B), \qquad
\varepsilon_i^*(\Lambda) = \varepsilon_i^*(B),
\end{equation*}
where $B$ is a generic point of $\Lambda$. By the definition,
$\Lambda \subset \N_{\alpha, \geq n, i}$ and $\Lambda \cap
\N_{\alpha, n,i}$ is dense in $\Lambda$ if and only if $n
=\varepsilon_i(\Lambda)$.

\begin{lem} \label{lem:varepsilon} The following statements hold.
\begin{itemize}
\item[(a)] $\varepsilon_i^*(\Lambda) = \varepsilon_i(\Lambda^*)$ for all
$i\in I$.

\item[(b)] If $\Lambda \in \Irr\,\N_{\alpha}$ and
$\varepsilon_i(\Lambda)=0$ for all $i\in I$, then $\alpha =0$ and
$\Lambda =0$.

\item[(c)] If $\Lambda \in \Irr\,\N_{\alpha}$ and
$\varepsilon_i^*(\Lambda)=0$ for all $i\in I$, then $\alpha =0$ and
$\Lambda =0$.

\end{itemize}
\end{lem}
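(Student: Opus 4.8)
The three statements build on one another, so the plan is to prove (a) first, then establish (b) by a direct triangularity argument along the flag, and finally deduce (c) from (a) and (b).

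For (a) the only thing to check is that the $*$-involution carries generic points of $\Lambda$ to generic points of $\Lambda^*$. Since $B\mapsto B^*$ is a linear automorphism of $X_\alpha$ preserving $\N_\alpha$ and squaring to the identity, it restricts to an isomorphism $\Lambda\isoto\Lambda^*$ of irreducible components, hence sends a dense open subset of $\Lambda$ onto a dense open subset of $\Lambda^*$. Taking $B$ generic in $\Lambda$, the point $B^*$ is then generic in $\Lambda^*$, and by the very definition of $\varepsilon_i^*$ we get $\varepsilon_i^*(\Lambda)=\varepsilon_i^*(B)=\varepsilon_i(B^*)=\varepsilon_i(\Lambda^*)$. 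This step is essentially formal.

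The heart of the matter is (b). Here I would prove the sharper fact that \emph{every} $B\in\N_\alpha$ with $\alpha\neq0$ admits some vertex $i$ with $\varepsilon_i(B)>0$; applying this to a generic point of $\Lambda$ gives the assertion. Fix a complete $I$-graded flag $F$ as in condition (i) defining $\N_\alpha$ and choose a basis $e_1,\dots,e_N$ adapted to it, so that $F_k=\langle e_1,\dots,e_k\rangle$ and each $e_k$ lies in a single $V_{i(k)}$. The flag conditions say that every $B_h$ with $h\notin\overline{\Omega}^{loop}$ maps $F_k$ into $F_{k-1}$ (strictly lowers the flag), while the loop operators $B_h$ with $h\in\overline{\Omega}^{loop}$ preserve each $F_k$. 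Let $i_0=i(N)$ be the vertex carrying the top vector $e_N$. For any arrow $h\cl j\to i_0$ with $j\neq i_0$ one has $\Im B_h=B_h(V_j)\subset B_h(F_N)\subset F_{N-1}$, and since the generators of $\C\langle B\rangle_{i_0}$ (the loop operators at $i_0$, together with the identity) all preserve $F_{N-1}$, the entire submodule $\C\langle B\rangle_{i_0}\cdot\sum_{h\cl j\to i_0,\,j\neq i_0}\Im B_h$ lands in $F_{N-1}\cap V_{i_0}$. As $e_N\in V_{i_0}\setminus F_{N-1}$, this forces $\varepsilon_{i_0}(B)\geq1$, the desired contradiction. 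Hence $\alpha=0$, so $V=0$ and $\N_0$ is the single point $0$, giving $\Lambda=0$.

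Finally, (c) is immediate from (a) and (b): by (a) the hypothesis $\varepsilon_i^*(\Lambda)=0$ for all $i$ reads $\varepsilon_i(\Lambda^*)=0$ for all $i$, where $\Lambda^*\in\Irr\,\N_\alpha$, so (b) yields $\alpha=0$ and $\Lambda^*=0$, whence $\Lambda=(\Lambda^*)^*=0$. (Alternatively one can run the argument of (b) verbatim at the \emph{bottom} of the flag, using that the nonzero vector in $F_1\subset V_{i_1}$ lies in $\bigcap_{h\cl i_1\to j,\,j\neq i_1}\mathrm{Ker}(B_h\cdot\C\langle B\rangle_{i_1})$, because $\C\langle B\rangle_{i_1}$ preserves $F_1$ while each such $B_h$ kills it; this gives $\varepsilon^*_{i_1}(B)>0$ directly.) The main obstacle is the bookkeeping in (b): one must set up the flag and triangularity conventions carefully so that the loop algebra $\C\langle B\rangle_{i_0}$ genuinely preserves $F_{N-1}$ while every incoming arrow strictly lowers the flag, ensuring that nothing in the submodule can reach the top vector $e_N$.
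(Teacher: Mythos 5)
Your proposal is correct and follows essentially the same route as the paper: part (a) is formal, part (c) is deduced from (a) and (b), and the key step (b) is exactly the paper's argument of looking at the top step $F_{d}/F_{d-1}$ of the $I$-graded flag and observing that all incoming arrows and all loop operators at the corresponding vertex $i_0$ preserve $F_{d-1}$, forcing $\varepsilon_{i_0}(B)\geq 1$. The extra details you supply (genericity under the $*$-involution in (a), and the alternative bottom-of-the-flag argument for (c)) are correct but not needed beyond what the paper records.
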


\begin{proof} Statement (a) is obvious from the definitions. By (a),
statements (b) and (c) are equivalent. We now prove (b). Let
$\Lambda$ be as in (b), and assume that $\alpha \neq 0$. Let $B \in
\Lambda$ be a generic point, so that $\varepsilon_i(B)=0$ for all
$i$. By condition (i), there exists an $I$-graded complete flag
${F}=(F_0 \subset F_1 \cdots \subset F_{d})$ such that $B_h
(F_k) \subset F_k$ for all $h$ and $k$. In particular, $F_{d-1}$ is
stable under all operators $B_h$. Let $i_0 \in I$ be such that
$\underline{\dim} (F_d/F_{d-1})=\alpha_{i_0}$. We have
$\mathbf{C} \langle B \rangle_{i_0} \cdot \sum_{\substack{h\cl j \to i_0 \\
j \neq i_0}} \mathrm{Im}\, B_h \subset F_{d-1}$. But this yields
$\varepsilon_{i_0}(B) \geq 1$, which is a contradiction.
\end{proof}

\vskip 1cm

%%%%%%%%%%%%%%%%%%%%%%%%%%%%%%%%%%%%%%%%%%%%%%%%%%%%%%%%%%%%%%%%%%%%%%%%%%%%%%%%%%%%%%%%%%%%%%%%%%%

\section{Crystal Structure}

\vskip 3mm

For $i \in I$, $l \in \mathbf{N}$ and $\alpha = \sum d_i \alpha_i  \in
Q_{+}$, let
\begin{equation*}
\begin{aligned}
E_{\alpha; l\alpha_i}  = \big\{(B,B', B'', \phi', \phi) \;; \;
&B' \in \N_{\alpha}, B'' \in \N_{l \alpha_i}, B \in \N_{\alpha+ l\alpha_i}, \\
& 0 \longrightarrow V_{\alpha} \overset{\phi}\longrightarrow V_{\alpha+l\alpha_i}
\overset{\phi'} \longrightarrow V_{l \alpha_i} \longrightarrow
0 \ \ \text{is exact}, \\
& \phi \circ B' = B \circ \phi, \; \phi' \circ B=B'' \circ \phi'\big\}
\end{aligned}
\end{equation*}
be the space parametrizing the extensions of representations of the
quiver $(I,H)$. There are canonical maps
\begin{equation}\label{E:P1}
X_{\alpha} \times X_{l\alpha_i} \overset{p} \longleftarrow E_{\alpha; l\alpha_i}
\overset{q} \longrightarrow X_{\alpha+l\alpha_i}
\end{equation}
 given by
\begin{equation*}
p(B,B', B'', \phi, \phi') = (B', B''), \qquad
q(B,B', B'', \phi, \phi') = B
 \end{equation*}
and we may project further $p_{1}\cl E_{\alpha; l\alpha_i}
\longrightarrow X_{\alpha}$ . Put $\N_{\alpha; l\alpha_i}=q^{-1}(
\N_{\alpha+l\alpha_i})$. Then the diagram (\ref{E:P1}) restricts to
\begin{equation}\label{E:P2}
\begin{array}{c}
\xymatrix{
\N_{\alpha} \times \N_{l\alpha_i}\ar[dr]& \N_{\alpha; l\alpha_i}
\ar[l]_(.4){p}\ar[r]^q
\ar[d]^{p_1}
&\N_{\alpha+l\alpha_i}\\
&\N_{\alpha}}
\end{array}
\end{equation}
Observe that $p$ is not surjective in general. Indeed, if $i \in I$
is imaginary and $h \in \overline{\Omega}^{loop}$ is an edge loop at
$i$, then for any $(B,B',B'', \phi, \phi') \in \N_{\alpha;
l\alpha_i}$, the operator $B_{h}$ is regular semisimple, which
implies that the spectra of $B'_h$ and $B''_h$ are disjoint. Let us
denote by $\N_{\alpha} \times^{reg} \N_{l\alpha_i} \subset
\N_{\alpha} \times \N_{l \alpha_i}$ the open subset of pairs
$(B',B'')$ for which the operators $B'_h, B''_h$ have disjoint
spectra for any edge loop $h \in \overline{\Omega}^{loop}$ at $i$.

\vspace{.1in}

For any $n \geq 0$, we define $\N_{\alpha, n, i} \times^{reg} \N_{l
\alpha_i}$ to be the intersection of $\N_{\alpha} \times^{reg} \N_{l
\alpha_i}$ with $\N_{\alpha, n, i} \times \N_{l\alpha_i}$. The
locally closed subspace $\N_{\alpha, n,i; l \alpha_i}$ of
$\N_{\alpha; l\alpha_i}$ is defined in a similar fashion. That is,
$\N_{\alpha, n, i; l \alpha_i} = q^{-1}(\N_{\alpha + l \alpha_i, n,
i})$. Finally, let $Z_{\alpha, l\alpha_i}$ be the set of short exact
sequences (of $I$-graded vector spaces) $0 \longrightarrow
V_{\alpha} \overset{\phi}\longrightarrow V_{\alpha+l\alpha_i}
\overset{\phi'} \longrightarrow V_{l \alpha_i} \longrightarrow 0
$. Note that $GL_{\alpha + l \alpha_i}$ acts on $Z_{\alpha, l
\alpha_i}$ transitively.

%Note that it may be identified with $GL_{\alpha+l\alpha_i}/U$ for some
%appropriate subgroup $U$.

\vspace{.1in}

\begin{prop}\label{P:dim} The following statements hold~{\rm :}

{\rm (a)} The restriction of $q$ to $\N_{\alpha,l,i; l\alpha_i}$ is
a $GL_{\alpha} \times GL_{l \alpha_i}$-principal bundle.

{\rm (b)} For $\alpha = \sum_{k} d_k \alpha_k$, the restriction of
$p$ to $\N_{\alpha,l,i; l\alpha_i}$ factors as
$$ \N_{\alpha,l,i;l\alpha_i} \stackrel{p'}{\longrightarrow}
(\N_{\alpha,0,i}\times^{reg} \N_{l\alpha_i}) \times Z_{\alpha,
l\alpha_i} \stackrel{p''}{\longrightarrow}
\N_{\alpha,0,i}\times^{reg} \N_{l\alpha_i},$$ where $p''$ is the
natural projection and $p'$ is an affine fibration of rank
$$r=l(\sum_{j \neq i} c_{ij}d_j + (c_{ii}-1)d_i)
= ( l\alpha_i, (\Id -A) \cdot \alpha ).$$
\end{prop}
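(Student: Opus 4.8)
The plan is to read a point of $\N_{\alpha,l,i;l\alpha_i}$ as a short exact sequence $0\to B'\to B\to B''\to 0$ of representations with $B\in\N_{\alpha+l\alpha_i}$, $\varepsilon_i(B)=l$, $B'\in\N_{\alpha}$ and $B''\in\N_{l\alpha_i}$, and to describe the fibers of $q$ and $p$ in these terms. The guiding observation is that $B''$ is supported at the vertex $i$, so the subrepresentation $W=\Im\phi$ satisfies $W_j=V_j$ for $j\neq i$ and $\mathrm{codim}_{V_i}W_i=l$, while $B$-stability forces $W_i$ to contain $\C\langle B\rangle_i\cdot\sum_{h:j\to i,\,j\neq i}\Im B_h$ and to be stable under the loop operators.

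For (a), since this last subspace already has codimension $\varepsilon_i(B)=l$ in $V_i$, I conclude that $W_i$ is forced to equal $\C\langle B\rangle_i\cdot\sum_{h:j\to i,\,j\neq i}\Im B_h$; hence over a given $B\in\N_{\alpha+l\alpha_i,l,i}$ there is a \emph{unique} $B$-stable $I$-graded subspace with sub- and quotient of the prescribed dimension vectors. The fiber of $q$ over $B$ therefore consists exactly of the framings $(\phi,\phi')$ of $W$ and $V/W$, which form a single free orbit of $GL_{\alpha}\times GL_{l\alpha_i}$; conversely every $B$ in the stratum arises this way, the induced $B',B''$ lying automatically in $\N_{\alpha},\N_{l\alpha_i}$ because conditions (i)--(iii) pass to subrepresentations and quotients (flags by intersection/image, the moment map relation by sub/quotient of modules, and regular semisimplicity by restriction/descent). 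As $W_i$ depends algebraically on $B$ with constant dimension along the stratum and $GL$ is a special group, $q$ is a locally trivial principal $GL_{\alpha}\times GL_{l\alpha_i}$-bundle.

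For (b) I first check that $p$ lands in $\N_{\alpha,0,i}\times^{reg}\N_{l\alpha_i}$: one has $\varepsilon_i(B')=0$ because $\sum_{h:j\to i}\Im B'_h$ already generates $W_i$ under $\C\langle B\rangle_i$, and the regular semisimple loop operators $B_h$ ($h\in\overline{\Omega}^{loop}$) split their distinct eigenvalues disjointly between $W_i$ and $V_i/W_i$, yielding the $\times^{reg}$ condition. Recording in addition the underlying exact sequence of $I$-graded spaces defines $p'$, and $p''$ is the projection, so $p=p''\circ p'$. To identify the fibers of $p'$, I fix $(B',B'')$ and an exact sequence, choose a splitting, and write $B_h=\left(\begin{smallmatrix}B'_h&C_h\\0&B''_h\end{smallmatrix}\right)$; the fiber is the set of extension data $(C_h)_{h\in H}$ for which $B\in\N_{\alpha+l\alpha_i}$ with $\varepsilon_i(B)=l$. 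The decisive simplification is that conditions (i), (iii) and the equality $\varepsilon_i(B)=l$ all hold automatically: a flag for $B$ is obtained by concatenating flags for $B'$ and $B''$; the block-triangular loop operators are regular semisimple precisely because the two spectra are disjoint; and $\varepsilon_i(B)=l$ follows from $\varepsilon_i(B')=0$ by the computation $\C\langle B\rangle_i\cdot\sum_{h:j\to i}\Im B_h=W_i$. Thus the sole remaining constraint is the off-diagonal part of the moment map, a linear map $d\colon\bigoplus_{h\in H}\Hom(V''_{\out(h)},V'_{\sink(h)})\to\bigoplus_{k}\Hom(V''_k,V'_k)$, and each fiber is the linear space $\ker d$.

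It then remains to show $\ker d$ has constant dimension $r$; this is the main obstacle. A dimension count (domain $l\sum_j c_{ij}d_j$, codomain $ld_i$) gives $\dim\ker d=\dim\operatorname{coker}d+r$, so I must prove $d$ is surjective. Interpreting $d$ as the last differential of the standard complex computing $\on{Ext}^{\bullet}$ between $B''$ and $B'$ over the preprojective algebra attached to the moment map relations, its cokernel is $\on{Ext}^2(B'',B')\cong D\,\Hom(B',B'')$ by the $2$-Calabi--Yau duality. Finally $\Hom(B',B'')=0$: any homomorphism $B'\to B''$ kills $\sum_{h:j\to i}\Im B'_h$ and, being compatible with the loop operators, kills the subspace they generate, which is all of $V'_i$ since $\varepsilon_i(B')=0$. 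Hence $\operatorname{coker}d=0$, $\dim\ker d=r$ is constant, the spaces $\ker d$ organize into a rank-$r$ vector bundle over the base, and $p'$ is an affine fibration of rank $r=(l\alpha_i,(\Id-A)\cdot\alpha)$.
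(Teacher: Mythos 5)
Your part (a), and the reduction in part (b) --- the unique $B$-stable subspace $W$ generated by $\bigoplus_{k\neq i}V_k$, the block-triangular form of $B$ after a splitting, the observation that the flag condition, the regular-semisimplicity condition and the equality $\varepsilon_i(B)=l$ are automatic, leaving only the off-diagonal block of $\mu_i$ as a linear constraint $d$ on the extension data --- coincide with the paper's argument. Where you genuinely diverge is in proving that $d$ is surjective (hence of constant corank). The paper does this by hand, splitting into the cases $i$ real and $i$ imaginary: in the real case it uses $\varepsilon_i(B')=0$ to see that $\sum_h\varepsilon(h)B'_{\overline{h}}$ surjects onto $W_i$; in the imaginary case it singles out one loop $h_0\in\Omega_i^{loop}$ and inverts the Sylvester operator $z\mapsto B'_{\overline{h_0}}z-zB''_{\overline{h_0}}$, which is exactly where the disjoint-spectra ($\times^{reg}$) hypothesis enters the moment-map analysis. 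You instead identify $\operatorname{coker}(d)$ with $D\Hom(B',B'')$ and kill $\Hom(B',B'')$ using $\varepsilon_i(B')=0$. This is correct, treats the real and imaginary cases uniformly, and cleanly isolates the fact that surjectivity of $d$ needs only $\varepsilon_i(B')=0$, the disjoint-spectra condition being used solely for condition (iii); it also explains the rank $r=(l\alpha_i,(\Id-A)\cdot\alpha)$ as an Euler-form count. Two caveats. First, do not route this through ``$\operatorname{Ext}^2$ and 2-Calabi--Yau duality'': for a quiver with loops the status of the bimodule resolution of the preprojective algebra is a distraction, and all you actually need is the elementary fact (Crawley-Boevey) that under the trace pairing $d$ is adjoint, up to sign, to the map $f\mapsto(f_{\out(h)}B'_h-B''_hf_{\sink(h)})_h$ whose kernel is $\Hom(B',B'')$; that identity is a direct computation valid for arbitrary quivers, loops included, and should be stated or cited in that form. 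Second, in your vanishing argument make explicit that $\ker f_i$ is stable under \emph{all} loop operators $B'_\sigma$, $\sigma\in H_i^{loop}$ (both orientations), since $\C\langle B'\rangle_i$ is generated by all of them. With those points tightened, your proof is a valid, somewhat slicker alternative to the paper's more elementary and self-contained case analysis.
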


\begin{proof} By the definition, if $(B,B',B'', \phi, \phi')$ belongs to
$\N_{\alpha, l, i; l\alpha_i}$, then there exists a unique
$B$-invariant submodule $W \subset V_{\alpha+l\alpha_i}$ such that
$\dim(V_{\alpha+l\alpha_i}/W)=l\alpha_i$. Namely, $W$ is the
submodule generated by $\bigoplus_{k \neq i} V_k$. This means that
$\mathrm{Im}(\phi)$ is uniquely determined, and thus $\phi, \phi'$ are
also determined up to a (free) $GL_{\alpha} \times
GL_{l\alpha_i}$-action, which proves (a).

We turn to (b). The map $p'$ is given by $(B,B',B'',\phi,\phi')
\mapsto ((B',B''),(\phi,\phi'))$. Note that by the above argument
the image of $p'$ indeed lies in $(\N_{\alpha,0,i} \times^{reg}
\N_{l\alpha_i})\times Z_{\alpha, l \alpha_i}$. Now let us fix
$(B',B'', \phi, \phi')$, set $W=\phi(V_{\alpha})$ and choose a
complement $U$ to $W$ in $(V_{\alpha+l\alpha_i})_i$. Thus
$\dim\,U=l$ and $\dim\, W_i = d_i$. We identify
$V_{l\alpha_i}$ with $U$ via $\phi'$, and $V_{\alpha}$ with
$\mathrm{Im}(\phi)$ via $\phi$. The fiber of $p'$ consists of
operators $B=(B_h)_h \in \N_{\alpha+l\alpha_i}$ which restrict to
$B'$ on $W$
%$\bigoplus_{k \neq i} (V_{\alpha})_k \oplus W$
and induces $B''$ on $U$. We may write
$$\begin{cases}
B_h=B'_h & \text{if}\; \out(h) \neq i,\\
B_h=B'_h + y_h & \text{if}\; \out(h)=i,\; \sink(h)=j \neq i,
\;\text{where}\; y_h\cl U \to (V_{\alpha})_j,\\
B_h=B'_h + B''_h + z_h & \text{if}\; \out(h)=\sink(h)=i, \;\text{where}\; z_h\cl U \to W.
\end{cases}$$

Given $(y_h, z_h)_h$ as above, the conditions for $B$ to belong to
$\N_{\alpha+l\alpha_i}$ are as follows~:
\begin{enumerate}
\item[i)] There exists a flag ${F}=(F_0 \subset F_1 \subset F_2 \cdots)$
such that $B_h(F_i) \subset F_i$ if $h \in \overline{\Omega}^{loop}$
and $B(F_i) \subset F_{i-1}$ otherwise,
\item[ii)] $\mu_k(B)=0$ for all $k$,
\item[iii)] $B_h$ is regular semisimple for $h \in \overline{\Omega}^{loop}$.
\end{enumerate}
The first condition is always satisfied: we may stack the flags
${F}',\, {F}''$ of $B'$ and $B''$ together; i.e., set
$F_n=F'_n$ for $n \leq \dim(V_\alpha)$ and
$F_{\dim(V_\alpha)+m}=V_{\alpha}\oplus F''_m$ for $m \leq l$.
The third condition is also always fulfilled because $(B',B'') \in
\N_{\alpha} \times^{reg} \N_{l\alpha_i}$. It remains to verify the
second condition, which reduces to $\mu_i(B)=0$. At this point we
distinguish two cases.

\vspace{.1in}

\noindent \underline{Case 1)} The vertex $i$ is real (i.e.,
$c_{ii}=0$).

Since $\mu_i(B')=0$, the moment map condition $\mu_i(B)=0$ reads
\begin{equation}\label{E:kop}
0 = \sum_{h\cl i \to j} \varepsilon(h)(B'_{\oh} y_h+
B'_{\oh}B'_h)=\sum_{h\cl j \to i}  \varepsilon(h)B'_{\oh} y_h.
\end{equation}
This implies $\mathrm{Im}( y\cl U \to \bigoplus_{h\cl i \to j}
(V_{\alpha})_j)$ lies in the kernel of the map
$$\sum_{h\cl i \to j} \varepsilon(h)B'_{\oh}\cl\bigoplus (V_{\alpha})_j \to W.$$
But since
$B' \in \N_{\alpha, i,0}$, we have
$$\dim\;\mathrm{Im}( \sum_{h\cl i
\to j} \varepsilon(h) B'_{\oh})=\dim(W_i)=d_i$$ and hence
$$\dim\;\mathrm{Ker}\big(\sum_{h\cl i \to j} \varepsilon(h) B'_{\oh}\big)
=\dim\big( \bigoplus_{h\cl i \to j} (V_{\alpha})_j\big) -d_i
=\sum_{j \neq i} c_{ij}d_j-d_i.$$ It follows that the fiber of $p'$
is an affine space of dimension $l (\sum_{j \neq i} c_{ij}d_j-d_i)$
as wanted.

\vspace{.1in}

\noindent \underline{Case 2)} The vertex $i$ is imaginary (i.e.,
$c_{ii}>0$).

Since $\mu_i(B')=\mu_i(B'')=0$, the moment map condition
$\mu_i(B)=0$ reads
\begin{equation}\label{E:kop2}
\begin{split}
0=&   \sum_{h \in \Omega_i^{loop}}
\big( [B'_{\oh},B'_h] + [B''_{\oh},B''_h] + (z_{\oh}B''_h -B'_hz_{\oh}) +(B'_{\oh}z_h -z_h B''_{\oh}) \big)\\
&\hspace{1.5in} +\sum_{\substack{h\cl i \to j\\ j\neq i}} \varepsilon(h) \big( B'_{\oh}y_h +B'_{\oh}B'_h\big) \\
=&\sum_{\substack{h\cl i \to j\\ j\neq i}} \varepsilon(h) \big(
B'_{\oh}y_h \big) +\sum_{h \in \Omega_i^{loop}} \left((z_{\oh}B''_h
-B'_hz_{\oh}) +(B'_{\oh}z_h -z_h B''_{\oh}) \right).
\end{split}
\end{equation}
Observe that, because $B''_{\oh} \in \mathrm{End}(U)$ and $B'_{\oh}\in
\mathrm{End}(W)$ have disjoint spectrum, the map
$$\mathrm{Hom}(U,W) \to \mathrm{Hom}(U,W), \quad z_h \mapsto B'_{\oh}z_h-z_h B''_{\oh}$$
is invertible for all $h \in \Omega_i^{loop}$. In particular, we may
choose $(y_h)_h, (z_{\oh})_h$ arbitrarily as well as all $z_{h}$
except for one, and uniquely solve (\ref{E:kop2}) for that last
$z_h$. Thus the space of solutions to (\ref{E:kop2}) is of dimension
$l (\sum_{j \neq i} c_{ij}d_j + (c_{ii}-1)d_i)$, which completes the
proof. \end{proof}

\vspace{.1in}

\begin{cor}\label{C:Lag} For any $\alpha \in Q_{+}$, the variety $\N_{\alpha}$ is Lagrangian.
\end{cor}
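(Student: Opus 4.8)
The plan is to induct on the height $\operatorname{ht}(\alpha)=\sum_j d_j$ of $\alpha=\sum_j d_j\alpha_j$, the case $\alpha=0$ being trivial. By Lemma~\ref{lem:isotropic} the variety $\N_\alpha$ is isotropic, so every irreducible component $\Lambda$ already satisfies $\dim\Lambda\le\frac12\dim X_\alpha$; it therefore suffices to prove the reverse inequality on each component, i.e. that $\N_\alpha$ is equidimensional of dimension exactly $\frac12\dim X_\alpha$.

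So fix an irreducible component $\Lambda$ of $\N_\alpha$ with $\alpha\neq0$. By Lemma~\ref{lem:varepsilon}(b) there is an index $i\in I$ with $l\seteq\varepsilon_i(\Lambda)>0$. By the remark preceding Lemma~\ref{lem:varepsilon} we have $\Lambda\subset\N_{\alpha,\ge l,i}$ and $\Lambda\cap\N_{\alpha,l,i}$ is dense---hence open---in the irreducible variety $\Lambda$; since $\Lambda$ is a maximal irreducible subset of $\N_\alpha$, it follows that $\Lambda\cap\N_{\alpha,l,i}$ is an irreducible component of the stratum $\N_{\alpha,l,i}$ and that $\dim\Lambda=\dim(\Lambda\cap\N_{\alpha,l,i})$. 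Thus it is enough to show that every nonempty stratum $\N_{\alpha,l,i}$ with $l>0$ is equidimensional of dimension $\frac12\dim X_\alpha$.

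To do this I would set $\beta=\alpha-l\alpha_i$, so that $\operatorname{ht}(\beta)<\operatorname{ht}(\alpha)$ and $\alpha=\beta+l\alpha_i$, and apply Proposition~\ref{P:dim} with $\beta$ in the role of its $\alpha$. There $\N_{\beta,l,i;l\alpha_i}=q^{-1}(\N_{\alpha,l,i})$, and the diagram
$$\N_{\alpha,l,i}\ \xleftarrow{\ q\ }\ \N_{\beta,l,i;l\alpha_i}\ \xrightarrow{\ p'\ }\ (\N_{\beta,0,i}\times^{reg}\N_{l\alpha_i})\times Z_{\beta,l\alpha_i}$$
exhibits $q$ as a $GL_\beta\times GL_{l\alpha_i}$-principal bundle onto $\N_{\alpha,l,i}$ and $p'$ as an affine fibration of rank $r=(l\alpha_i,(\Id-A)\cdot\beta)$. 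Both morphisms have smooth irreducible fibers of constant dimension, so equidimensionality passes from the base to $\N_{\beta,l,i;l\alpha_i}$ and then down to $\N_{\alpha,l,i}$. The base is a product: $\N_{\beta,0,i}$ is open in $\N_\beta$, hence equidimensional of dimension $\frac12\dim X_\beta$ by the inductive hypothesis; $\N_{l\alpha_i}$ is irreducible of dimension $\frac12\dim X_{l\alpha_i}$ by Lemma~\ref{L:la}; the locus $\times^{reg}$ is open in the product; and $Z_{\beta,l\alpha_i}$ is a $GL_\alpha$-homogeneous space, hence smooth and irreducible. Consequently $\N_{\alpha,l,i}$ is equidimensional of dimension
$$\tfrac12\dim X_\beta+\tfrac12\dim X_{l\alpha_i}+\dim Z_{\beta,l\alpha_i}+r-\dim GL_\beta-\dim GL_{l\alpha_i}.$$

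It then remains to check that this number equals $\frac12\dim X_\alpha$, which is a purely numerical identity. Writing $\beta=\sum_j d_j\alpha_j$ and using $\dim X_\gamma=((2\Id-A)\cdot\gamma,\gamma)=\sum_{j,k}c_{jk}\gamma_j\gamma_k$, one computes $\frac12(\dim X_\alpha-\dim X_\beta-\dim X_{l\alpha_i})=l\sum_j c_{ij}d_j$; on the other hand $\dim Z_{\beta,l\alpha_i}=(\beta,\beta)+ld_i+l^2$, $\dim GL_\beta=(\beta,\beta)$, $\dim GL_{l\alpha_i}=l^2$, and $r=l\bigl(\sum_{j\neq i}c_{ij}d_j+(c_{ii}-1)d_i\bigr)$, and these combine to give $\dim Z_{\beta,l\alpha_i}+r-\dim GL_\beta-\dim GL_{l\alpha_i}=l\sum_j c_{ij}d_j$ as well. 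This closes the induction, and together with Lemma~\ref{lem:isotropic} proves that $\N_\alpha$ is Lagrangian. I expect the main obstacle to be not the identity itself but the careful propagation of equidimensionality through the principal bundle and affine fibration of Proposition~\ref{P:dim}: this is exactly where the inductive hypothesis that $\N_\beta$ is already pure-dimensional is indispensable, since without it the base of $p'$ need not be equidimensional and the dimension count would yield only an upper bound.
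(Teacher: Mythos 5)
Your proposal is correct and follows essentially the same route as the paper: induction using Lemma~\ref{lem:varepsilon} to find $i$ with $\varepsilon_i(\Lambda)=l>0$, then transporting the dimension of $\Lambda\cap\N_{\alpha,l,i}$ through the principal bundle $q$ and the affine fibration $p'$ of Proposition~\ref{P:dim} down to $\N_{\beta,0,i}\times^{reg}\N_{l\alpha_i}$, with Lemma~\ref{L:la} handling the $\N_{l\alpha_i}$ factor and Lemma~\ref{lem:isotropic} supplying the isotropic upper bound. Your numerical identity $\dim Z_{\beta,l\alpha_i}+r-\dim GL_\beta-\dim GL_{l\alpha_i}=l\sum_j c_{ij}d_j=\tfrac12(\dim X_\alpha-\dim X_\beta-\dim X_{l\alpha_i})$ is exactly the paper's combination of (\ref{E:tryit}) and (\ref{E:tryitagain}), just written out more explicitly.
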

\begin{proof} We argue by induction on $\alpha$. The statement is true for $\alpha=l\alpha_i$
for some $i \in I$ and $l \in \mathbf{N}$ by Lemma~\ref{L:la}. Now
let $\alpha \in Q_{+}$ and let $\Lambda$ be an irreducible component
of $\N_{\alpha}$. By Lemma~\ref{lem:varepsilon}, there exists $i \in
I$ such that $\varepsilon_i(\Lambda) >0$. Set
$\varepsilon_i(\Lambda)=l$. Thus $\Lambda \cap \N_{\alpha, l,i}$ is
open and dense in $\Lambda$.
%an irreducible component of $\N_{\alpha,i,l}$ of the same dimension
%as $\Lambda$.
Put $\beta=\alpha-l\alpha_i$ and write $\beta=\sum_k
d_k \alpha_k$.

By Proposition~\ref{P:dim} (a), $q^{-1} (\Lambda \cap
\N_{\alpha,l,i})$ is an irreducible component of
$\N_{\beta,l,i;l\alpha_i}$ of dimension $\dim \Lambda +
\dim(GL_{\beta} \times GL_{l\alpha_i})$. Similarly, by
Proposition~\ref{P:dim} (b), $p$ is a smooth map with fibers
$Z_{\beta,l\alpha_i} \times \mathbb{A}^r$ with $r=( l\alpha_i,
(\Id-A) \cdot \beta )$, and thus $pq^{-1}(\Lambda \cap \N_{\alpha,l,i})$ is
%\cmt{M:$q^{-1}$}
an irreducible component of $\N_{\beta,0,i}
\times^{reg}\N_{l\alpha_i}$ of dimension
\begin{equation}\label{E:tryit}
\begin{split}
\dim\Lambda + \dim(GL_{\beta}\times GL_{l\alpha_i})-\dim
Z_{\beta,l\alpha_i}-r =\dim \Lambda + (l\alpha_i, (A-2\Id) \cdot
\beta).
\end{split}
\end{equation}
Recall that $\N_{\beta,0,i}$ is open in $\N_{\beta}$. Hence, by the
induction hypothesis, any irreducible component of $\N_{\beta,0,i}
\times^{reg} \N_{l\alpha_i}$ is of dimension
\begin{equation}\label{E:tryitagain}
\frac{1}{2} (\beta, (2\Id-A) \cdot \beta)  + \frac{1}{2} (l\alpha_i,
(2 \Id-A) \cdot l\alpha_i).
\end{equation}
Combining (\ref{E:tryit}) and (\ref{E:tryitagain}), we get the
dimension formula $\dim \Lambda= \frac{1}{2} (\alpha, (2\Id -A) \cdot
\alpha)$ as wanted.
\end{proof}

\vspace{.1in}

\begin{cor} For $\alpha \in Q_{+}$, $i \in I$ and $l \in \mathbf{N}$,
there is a one-to-one correspondence between the set of irreducible
components of $\N_{\alpha}$ satisfying $\varepsilon_i(\Lambda)=l$
and the set of irreducible components of
$\N_{\alpha-l\alpha_i,0,i}$.
\end{cor}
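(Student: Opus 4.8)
The plan is to realize the correspondence as a composite of three bijections, each induced by one of the maps of Proposition~\ref{P:dim}, applied with the roles of $\alpha$ and $\alpha+l\alpha_i$ there played by $\beta\seteq\alpha-l\alpha_i$ and $\alpha$ (so that $\N_{\beta,l,i;l\alpha_i}=q^{-1}(\N_{\alpha,l,i})$). The guiding principle throughout is that a morphism which is Zariski-locally a fibration with irreducible fibres of constant dimension induces a dimension-shifting bijection between the irreducible components of its source and of its target: the preimage of a component is again a component, and the map is open and surjective, so every component of the source dominates one of the target. I will apply this principle to $q$, to the two factors of $p$, and to a product projection, and then feed back equidimensionality to control the strata.

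First I would reduce the left-hand side to the stratum $\N_{\alpha,l,i}$. Since $\N_\alpha$ is Lagrangian by Corollary~\ref{C:Lag}, it is equidimensional; and by the density characterization of $\varepsilon_i(\Lambda)$ recorded just before Lemma~\ref{lem:varepsilon}, a component $\Lambda$ of $\N_\alpha$ with $\varepsilon_i(\Lambda)=l$ meets $\N_{\alpha,l,i}$ in a dense open, hence top-dimensional, irreducible subset, from which $\Lambda=\overline{\Lambda\cap\N_{\alpha,l,i}}$ recovers $\Lambda$. This exhibits $\Lambda\mapsto\Lambda\cap\N_{\alpha,l,i}$ as an injection from the set of components of $\N_\alpha$ with $\varepsilon_i(\Lambda)=l$ into the set of components of $\N_{\alpha,l,i}$, whose image consists exactly of the components of dimension $\dim\N_\alpha$.

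Next I would transport the components of the stratum across $p$ and $q$. By Proposition~\ref{P:dim}(a), $q\cl\N_{\beta,l,i;l\alpha_i}\to\N_{\alpha,l,i}$ is a $GL_\beta\times GL_{l\alpha_i}$-principal bundle, with irreducible fibres of constant dimension, so it induces a bijection on components. By Proposition~\ref{P:dim}(b), $p$ factors as an affine fibration $p'$ of rank $r=(l\alpha_i,(\Id-A)\cdot\beta)$ followed by the projection $p''$ whose fibre is the irreducible homogeneous space $Z_{\beta,l\alpha_i}$; both steps induce bijections on components, so $p$ identifies the components of $\N_{\beta,l,i;l\alpha_i}$ with those of $\N_{\beta,0,i}\times^{reg}\N_{l\alpha_i}$. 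Finally, since $\N_{l\alpha_i}$ is irreducible by Lemma~\ref{L:la}, and the open condition $\times^{reg}$ (empty of content when $i$ is real, and a dense open disjoint-spectra condition when $i$ is imaginary) meets each $C\times\N_{l\alpha_i}$ in a dense open subset, the components of $\N_{\beta,0,i}\times^{reg}\N_{l\alpha_i}$ are precisely the $(C\times\N_{l\alpha_i})\cap(\N_\beta\times^{reg}\N_{l\alpha_i})$ for $C$ a component of $\N_{\beta,0,i}$. Composing the three bijections gives a dimension-shifting bijection between the components of $\N_{\alpha,l,i}$ and those of $\N_{\beta,0,i}$.

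The main obstacle is to rule out spurious low-dimensional components of $\N_{\alpha,l,i}$, since a priori only its top-dimensional components lie in the image of the injection of the second step. This is where I would feed back equidimensionality. As $\N_{\beta,0,i}$ is open in the Lagrangian variety $\N_\beta$ (Corollary~\ref{C:Lag}) and $\N_{l\alpha_i}$ is Lagrangian and irreducible (Lemma~\ref{L:la}), the target $\N_{\beta,0,i}\times^{reg}\N_{l\alpha_i}$ is equidimensional; because the composite bijection above shifts dimension by the fixed constant $\dim(GL_\beta\times GL_{l\alpha_i})-r-\dim Z_{\beta,l\alpha_i}$ (all fibres having constant dimension), the stratum $\N_{\alpha,l,i}$ is itself equidimensional, and the dimension computation of Corollary~\ref{C:Lag} pins this common dimension to $\dim\N_\alpha$. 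Hence every component of $\N_{\alpha,l,i}$ is top-dimensional, so is of the form $\Lambda\cap\N_{\alpha,l,i}$ with $\varepsilon_i(\Lambda)=l$; this upgrades the second-step injection to a bijection and closes the chain, giving the desired one-to-one correspondence.
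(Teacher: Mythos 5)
Your argument is correct and follows essentially the same route as the paper's: transport irreducible components along $q$ and the factorization of $p$ from Proposition~\ref{P:dim}, use the irreducibility of $\N_{l\alpha_i}$ from Lemma~\ref{L:la}, and invoke the equidimensionality supplied by Corollary~\ref{C:Lag} to see that every component of the stratum $\N_{\alpha,l,i}$ is top-dimensional and hence comes from a component of $\N_\alpha$ with $\varepsilon_i(\Lambda)=l$. Your write-up is merely more explicit than the paper's on two points it leaves implicit (that the $\times^{reg}$ condition is dense in each $C\times\N_{l\alpha_i}$, and how half-dimensionality of the stratum's components is actually deduced), which is a virtue rather than a divergence.
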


\begin{proof} By Proposition~\ref{P:dim}, the maps $p$ and $q$ in (\ref{E:P1}),
when restricted to $\N_{\alpha, l,i; l\alpha_i}$, are locally
trivial, smooth and with connected fibers. It follows that there
is a natural bijection between the sets of irreducible components
of $\N_{\alpha,l,i}$ and $\N_{\alpha-l\alpha_i,0,i} \times^{reg}
\N_{l\alpha_i}$. By Lemma~\ref{L:la}, $\N_{l\alpha_i}$ is
irreducible, and hence we obtain a bijection between the sets of
irreducible components of $\N_{\alpha,l,i}$ and
$\N_{\alpha-l\alpha_i,0,i}$. By Corollary~\ref{C:Lag}, any
irreducible component of $\N_{\alpha}$ is half dimensional and the
same is true for all irreducible components of $\N_{\alpha, l,i}$.
It follows that the irreducible components of $\N_{\alpha,l,i}$
are precisely the intersections of $\N_{\alpha,l,i}$ with the
irreducible components of $\N_{\alpha}$ satisfying
$\varepsilon_i(\Lambda)=l$, and we are done. \end{proof}

\vspace{.2in}

 Following \cite{KS97} and \cite{Lus91}, we will denote this
 one-to-one correspondence by $\Lambda \mapsto \eit^{l}(\Lambda)$,
 and define the Kashiwara operators $\eit,\fit$ on the set
 $\bigsqcup_{\alpha} \Irr\;\N_{\alpha} \cup \{0\}$ by
 \begin{equation}\label{eq:eit}
 \begin{aligned}
 \eit(\Lambda) &= \begin{cases} (\eit^{l-1})^{-1} \circ \eit^l (\Lambda)
 & \text{if}\; \varepsilon_i(\Lambda)=l>0,\\
 0 & \text{if}\; \varepsilon_i(\Lambda)=0,
 \end{cases}\\
 \fit(\Lambda)&= (\eit^{l+1})^{-1} \circ \eit^l(\Lambda) \qquad \text{if}\; \varepsilon(\Lambda)=l.
 \end{aligned}
 \end{equation}

Recall that we have fixed an identification $V_i \overset{\sim} \longrightarrow V_i^*$
and defined
$$B^* = (B_h^t)_{h\in H}, \qquad \varepsilon_i^*(B) =
\varepsilon_i(B^*).$$
We set
\begin{equation}\label{eq:eitstar}
\fit^* = * \circ \fit \circ *, \qquad \eit^* = * \circ \eit \circ *.
\end{equation}

\vspace{.1in}

The following Proposition is straightforward.

\begin{prop}\label{P:crys} \hfill

\begin{itemize}

\item[(a)] For any  $\Lambda \in \Irr \;\N_{\alpha}$, we have
$$\eit\fit (\Lambda) = \Lambda, \qquad
\varepsilon_i(\fit(\Lambda)) = \varepsilon_i(\Lambda) + 1,$$ and if
$\varepsilon_i(\Lambda)>0$, then
$$\fit\eit (\Lambda) = \Lambda, \qquad
\varepsilon_i(\eit(\Lambda)) = \varepsilon_i(\Lambda) - 1.$$

\item[(b)]  For any  $\Lambda \in \Irr \;\N_{\alpha}$, we have
$$\eit^*\fit^* (\Lambda) = \Lambda, \qquad
\varepsilon^*_i(\fit^*(\Lambda)) = \varepsilon^*_i(\Lambda) + 1,$$
and if $\varepsilon^*_i(\Lambda)>0$, then
$$\fit^*\eit^* (\Lambda) = \Lambda, \qquad
\varepsilon^*_i(\eit^*(\Lambda)) = \varepsilon^*_i(\Lambda) - 1.$$
\end{itemize}
\end{prop}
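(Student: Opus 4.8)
The plan is to prove (a) by unwinding the definitions \eqref{eq:eit}, using the bijection $\Lambda \mapsto \eit^l(\Lambda)$ recorded just above; statement (b) will then follow formally from (a) by conjugation with the $*$-involution. The only substantive point is the bookkeeping of the degree $\alpha$ and of the value $\varepsilon_i$ at each stage, arranged so that the correspondence $\eit^m$ applied at one step is exactly inverted at the next.

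Fix $\Lambda \in \Irr\,\N_\alpha$ with $\varepsilon_i(\Lambda)=l$ and set $\Lambda_0 := \eit^l(\Lambda)$. Then $\Lambda_0 \in \Irr\,\N_{\alpha-l\ali,0,i}$, so it lies in degree $\alpha-l\ali$ and has $\varepsilon_i(\Lambda_0)=0$. Dually, for every $m\ge 0$ the inverse correspondence $(\eit^m)^{-1}$ sends an element of $\Irr\,\N_{\gamma,0,i}$ to the unique element of $\Irr\,\N_{\gamma+m\ali}$ with $\varepsilon_i=m$. Applying this with $\gamma=\alpha-l\ali$ shows at once that $\fit(\Lambda)=(\eit^{l+1})^{-1}(\Lambda_0)$ lies in $\Irr\,\N_{\alpha+\ali}$ with $\varepsilon_i=l+1$, and that (for $l>0$) $\eit(\Lambda)=(\eit^{l-1})^{-1}(\Lambda_0)$ lies in $\Irr\,\N_{\alpha-\ali}$ with $\varepsilon_i=l-1$. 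This already gives the two $\varepsilon_i$-formulas of (a), and shows that $\fit$ never returns $0$ while $\eit(\Lambda)=0$ precisely when $l=0$; hence all the compositions below remain among genuine irreducible components.

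For the cancellation identities I would argue as follows. Put $\Lambda'=\fit(\Lambda)$, so $\varepsilon_i(\Lambda')=l+1>0$; by \eqref{eq:eit}, $\eit(\Lambda')=(\eit^l)^{-1}\bigl(\eit^{l+1}(\Lambda')\bigr)$. Since $\Lambda'=(\eit^{l+1})^{-1}(\Lambda_0)$ gives $\eit^{l+1}(\Lambda')=\Lambda_0$, we obtain $\eit\fit(\Lambda)=(\eit^l)^{-1}(\Lambda_0)=(\eit^l)^{-1}\eit^l(\Lambda)=\Lambda$. The identity $\fit\eit(\Lambda)=\Lambda$ for $l>0$ is strictly parallel: with $\Lambda''=\eit(\Lambda)$ one has $\eit^{l-1}(\Lambda'')=\Lambda_0$, whence $\fit(\Lambda'')=(\eit^l)^{-1}(\Lambda_0)=\Lambda$. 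This establishes (a).

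Statement (b) will follow by conjugating (a) with $*$. By \eqref{eq:eitstar}, $\eit^*=*\circ\eit\circ*$ and $\fit^*=*\circ\fit\circ*$, and $*$ is an involution on $\bigsqcup_\alpha\Irr\,\N_\alpha$ with $\varepsilon_i^*=\varepsilon_i\circ*$ by Lemma~\ref{lem:varepsilon}(a). Thus $\eit^*\fit^*=*\,\eit\fit\,*=\id$ by (a); and since $(\fit^*\Lambda)^*=\fit(\Lambda^*)$, one gets $\varepsilon_i^*(\fit^*\Lambda)=\varepsilon_i(\fit(\Lambda^*))=\varepsilon_i(\Lambda^*)+1=\varepsilon_i^*(\Lambda)+1$, and similarly for the remaining two identities of (b). I expect no genuine obstacle here: the whole argument is formal once the bijection above is in hand, and the one point requiring care is matching, at each composition, the exponent $m$ of $(\eit^m)^{-1}$ with the $\varepsilon_i$-value produced by the preceding operator, so that the two correspondences really are mutually inverse.
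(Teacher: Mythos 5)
Your argument is correct: the paper itself gives no proof of Proposition~\ref{P:crys} (it is declared straightforward), and what you write is precisely the intended unwinding of the definition \eqref{eq:eit} via the bijection $\Lambda\mapsto\eit^l(\Lambda)$ of the preceding corollary, followed by formal conjugation with $*$ for part (b). The bookkeeping of degrees and of $\varepsilon_i$-values at each step checks out, so nothing further is needed.
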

\qed

\vspace{.2in}

Let  $$\mathcal{B} = \coprod_{\alpha \in Q_{+}}
\mathcal{B}_{-\alpha} = \coprod_{\alpha \in Q_{+}} \Irr\;
\N_{\alpha}.$$ For $\Lambda \in \Irr\; \N_{\alpha}$, we define
\begin{equation*}
\begin{aligned}
& \wt(\Lambda)  = -\alpha, \\
& \varepsilon'_i (\Lambda) = \begin{cases} \varepsilon_i(\Lambda) \
\
& \text{if} \ \ i\in I^{re}, \\
0 \ \ & \text{if} \ \ i\in I^{im},
\end{cases}\\
& \varphi_i(\Lambda) = \langle h_i, \wt(\Lambda) \rangle +
\varepsilon'_i(\Lambda).
\end{aligned}
\end{equation*}
Then using Proposition \ref{P:crys}, we obtain:

\begin{thm} \label{thm:crystal}
The maps $\wt$, $\varepsilon'_i, \varphi_i$, $\fit, \eit$ $(i\in I)$
define a $U_q(\g)$-crystal structure on $\mathcal{B}$. \qed
\end{thm}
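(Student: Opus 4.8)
The plan is to verify directly that the data $(\wt,\varepsilon'_i,\varphi_i,\eit,\fit)$ satisfy the seven conditions of Definition~\ref{defi:abstract crystal}, taking Proposition~\ref{P:crys} as the essential input. Several axioms are immediate. Condition~(iii) holds by the very definition $\varphi_i(\Lambda)=\langle h_i,\wt(\Lambda)\rangle+\varepsilon'_i(\Lambda)$, and condition~(vii) is vacuous, since $\varphi_i$ takes only integer values on $\mathcal{B}$ and never equals $-\infty$. For the weight conditions~(i) and~(ii), I would observe that by construction $\eit$ maps $\Irr\,\N_{\alpha}$ into $\Irr\,\N_{\alpha-\alpha_i}$ (when nonzero) and $\fit$ maps it into $\Irr\,\N_{\alpha+\alpha_i}$; since $\wt(\Lambda)=-\alpha$, this yields $\wt(\eit\Lambda)=\wt(\Lambda)+\alpha_i$ and $\wt(\fit\Lambda)=\wt(\Lambda)-\alpha_i$ at once.

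Next I would dispose of condition~(iv). By Proposition~\ref{P:crys}(a) one has $\eit\fit(\Lambda)=\Lambda$ for every $\Lambda$, and $\fit\eit(\Lambda)=\Lambda$ whenever $\varepsilon_i(\Lambda)>0$. Hence if $\fit\Lambda=\Lambda'$ then $\eit\Lambda'=\eit\fit\Lambda=\Lambda$; conversely, if $\Lambda=\eit\Lambda'\neq 0$ then necessarily $\varepsilon_i(\Lambda')>0$, so $\fit\Lambda=\fit\eit\Lambda'=\Lambda'$. This is exactly the equivalence required by~(iv).

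The substance of the argument is conditions~(v) and~(vi), which I would check separately for real and imaginary $i$. For $i\in I^{\re}$ one has $\varepsilon'_i=\varepsilon_i$ and $a_{ii}=2$, so the shifts $\varepsilon_i(\fit\Lambda)=\varepsilon_i(\Lambda)+1$ and $\varepsilon_i(\eit\Lambda)=\varepsilon_i(\Lambda)-1$ from Proposition~\ref{P:crys}(a) give the $\varepsilon'_i$-statements directly, and combining with the weight shift yields $\varphi_i(\fit\Lambda)=\varphi_i(\Lambda)-a_{ii}+1=\varphi_i(\Lambda)-1$ and $\varphi_i(\eit\Lambda)=\varphi_i(\Lambda)+1$, as required by~(vi)(a) and~(v)(a). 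For $i\in I^{\im}$ the quantity $\varepsilon'_i$ is identically $0$, so $\varepsilon'_i(\eit\Lambda)=\varepsilon'_i(\fit\Lambda)=\varepsilon'_i(\Lambda)$ holds trivially; here the entire change in $\varphi_i$ comes from the weight shift, giving $\varphi_i(\eit\Lambda)=\langle h_i,\wt(\Lambda)+\alpha_i\rangle=\varphi_i(\Lambda)+a_{ii}$ and $\varphi_i(\fit\Lambda)=\varphi_i(\Lambda)-a_{ii}$, matching~(v)(b) and~(vi)(b).

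The only genuinely delicate point, and the step I would treat most carefully, is the interplay between the geometric statistic $\varepsilon_i$ used to define the operators $\eit,\fit$ and the truncated statistic $\varepsilon'_i$ that enters the crystal structure at imaginary vertices. One must confirm that cutting $\varepsilon_i$ down to $0$ for $i\in I^{\im}$ is consistent with the arithmetic of $\varphi_i$: the $a_{ii}$-shifts prescribed in the imaginary case are forced precisely because $\varepsilon'_i$ is constant, so that all variation of $\varphi_i$ is absorbed into the term $\langle h_i,\wt(\cdot)\rangle$. Once this bookkeeping is confirmed in both cases, all seven axioms hold and $\mathcal{B}$ is a $U_q(\g)$-crystal.
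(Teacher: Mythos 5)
Your proposal is correct and follows exactly the route the paper intends: the paper states the theorem as an immediate consequence of Proposition~\ref{P:crys} and leaves the axiom-by-axiom verification to the reader, which is precisely the routine bookkeeping you have written out (including the correct observation that for $i\in I^{\im}$ the constancy of $\varepsilon'_i$ forces the $a_{ii}$-shifts of $\varphi_i$ to come entirely from the weight). No gaps.
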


\vspace{.1in}

To finish this section, we introduce the following useful
notation. Let $\Lambda \in \Irr\;\N_{\alpha}$ and put
$l=\varepsilon_i(\Lambda)$. Let $B$ be a generic element of
$\Lambda$ so that $\varepsilon_i(B)=l$. We define the element
$B'=\eit^l(B)$ as follows. Let $W \subset V_i$ be the
characteristic subspace
$$W= \C \langle B \rangle_i \cdot \sum_{\substack{h\cl j \to i\\ j \neq i}} \mathrm{Im}\;B_h.$$
It is of dimension $d_i-l$, where $\alpha=\sum_k d_k \alpha_k$. Then
$B' \in \N_{\alpha-l\alpha_i}$ is the restriction of $B$ to the
subspace $V' \subset V$, where $V'_j=V_j$ for $j \neq i$ and
$V'_i=W$. Moreover, it is a generic element of $\eit^l(\Lambda)$. Of
course, a similar definition can be given for $\eit^*$ as well.

\vspace{.2in}

%\paragraph{\textbf{Remark.}} The proof of Proposition~\ref{P:dim} also yields
%the following variants of the Lagrangian variety $\N_{\alpha}$.
%Let $(I,H,\Omega)$ be an oriented quiver as before, but let us
%assume that it doesn't have any edge loops. Let $J \subset I$ be
%any subset. Fix $\alpha \in Q_{+}$ and for each $j \in J$, fix a
%maximal nilpotent subalgebra $\mathfrak{n}_j \subset
%\mathfrak{gl}(V_j)$. Define a subvariety $\N_{\alpha}^J \subset
%X_{\alpha}$ by the conditions:
%\begin{enumerate}
%\item[(i)] $B$ is nilpotent,
%\item[(ii)] If $i \in I$ then $\mu_i(B)=0$,
%\item[(iii)] For every oriented cycle $h_1 h_2 \cdots h_r$ at a vertex $j \in J$,
%we have $B_{h_1} B_{h_2} \cdots B_{h_r} \in \mathfrak{n}_j$.
%\end{enumerate}
%Then $\N^J_{\alpha}$ is a Lagrangian subvariety of $X_{\alpha}$, and
%when $J=\emptyset$, one can recover Lusztig's nilpotent subvariety
%that was introduced in \cite{Lus91}.

\vskip 1cm

%%%%%%%%%%%%%%%%%%%%%%%%%%%%%%%%%%%%%%%%%%%%%%%%%%%%%%%%%%%%%%%%%%%%%%%%%%%%%%%%%%%%%%%%%%%%%%%%%%%%

\section{Geometric Construction of $B(\infty)$}

\vskip 3mm

Fix $i\in I$ and let $B_{i}=\set{b_i(-n)}{n\ge 0 }$ be the
elementary crystal. We define a map $\Psi_i \cl \mathcal{B}
\longrightarrow \mathcal{B} \ot B_{i}$ by
\begin{equation}
\Psi_i(\Lambda) = \eit^{*c} \Lambda \ot b_i(-c),
\end{equation}
where $c=\varepsilon_i^*(\Lambda)$.

\begin{thm} \label{thm:embedding}
The map $\Psi_i\cl \mathcal{B} \longrightarrow \mathcal{B} \ot B_i$ is
a strict crystal embedding.
\end{thm}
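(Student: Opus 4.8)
The plan is to verify that $\Psi_i(\Lambda)=\eit^{*c}\Lambda\ot b_i(-c)$, with $c=\vep_i^*(\Lambda)$, is a strict crystal morphism that is injective. Injectivity is the cheapest part: the tensor factor $b_i(-c)$ records $c=\vep_i^*(\Lambda)$, and from $\eit^{*c}\Lambda$ together with $c$ we recover $\Lambda=\fit^{*c}(\eit^{*c}\Lambda)$ by Proposition~\ref{P:crys}(b); so $\Psi_i$ has a left inverse and is injective. The weight compatibility is also immediate from $\wt(b_i(-c))=-c\alpha_i$ and $\wt(\eit^{*c}\Lambda)=\wt(\Lambda)+c\alpha_i$. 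What genuinely needs work is showing $\Psi_i$ commutes with all the Kashiwara operators $\fjt,\ejt$ and matches $\vep_j,\vphi_j$ for every $j\in I$, using the tensor-product crystal rules spelled out in Section~1.

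The main structural input I would isolate first is the interaction between $\eit^*$ (the $*$-modified operator) and the ordinary operators $\ejt,\fjt$. Following the pattern of \cite{KS97}, I expect two regimes. For $j\neq i$: since $\eit^*$ and $\ejt$ act by modifying the representation in essentially orthogonal directions (the $i$-part versus the $j$-part), they should commute, $\ejt\eit^*=\eit^*\ejt$, and moreover $\vep_j^*$ should be unaffected, so $\vep_i^*(\ejt\Lambda)=\vep_i^*(\Lambda)$; this makes the $j\neq i$ case of strictness reduce directly to the definition of the tensor-product operators together with the fact that $\ejt,\fjt$ act as $0$ on the elementary crystal $B_i$ when $j\neq i$. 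For $j=i$ the analysis is the heart of the matter: I must compute how $\vep_i^*$ changes under $\eit,\fit$ and compare $\vphi_i(\eit^{*c}\Lambda)$ against $\vep_i(b_i(-c))$ to see which tensor factor the operators $\eit,\fit$ act on, then check this reproduces $\Psi_i\eit=\eit\Psi_i$ and $\Psi_i\fit=\fit\Psi_i$. Here the real-versus-imaginary dichotomy from Definition~\ref{defi:abstract crystal}(v)--(vi) and the corresponding split in the tensor-product $\eit$-rule for $I^{\im}$ must be tracked carefully.

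The key geometric lemma I would aim to prove (and which I expect to be the main obstacle) is the commutation relation between $\eit$ and $\eit^*$ at the level of irreducible components, i.e.\ a ``$\vep$--$\vep^*$'' compatibility of the form $\vep_i^*(\eit\Lambda)$ versus $\vep_i^*(\Lambda)$, analogous to Lusztig's and Kashiwara--Saito's analysis of the two commuting crystal structures on $\Irr\,\N_\alpha$. Concretely, for a generic $B\in\Lambda$ one studies the two characteristic subspaces $W=\C\langle B\rangle_i\cdot\sum_{h:j\to i}\Im B_h$ and its $*$-analogue, and how passing to the sub/quotient representation governing $\eit$ changes the codimension computing $\vep_i^*$. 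The subtlety special to the generalized Kac-Moody setting is the presence of edge loops at imaginary $i$ and the regular-semisimplicity constraint from $X_\alpha^\circ$: one must verify that the disjoint-spectra condition defining $\N_\alpha\times^{reg}\N_{l\alpha_i}$ is compatible with applying $\eit^*$, so that the fibration structure of Proposition~\ref{P:dim} can be invoked on both the $i$ and $*$-$i$ sides simultaneously. Once this commutation/codimension lemma is in place, verifying the tensor-product identities becomes a finite case-check, and the theorem follows.
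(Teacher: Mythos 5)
Your outline matches the paper's strategy: injectivity and weight compatibility are immediate; for $j\neq i$ one proves that $\eit^*$ commutes with $\ejt$ and that $\vep_j(\eit^*\Lambda)=\vep_j(\Lambda)$ (the paper does this by a diagram chase with the two characteristic subspaces, exactly as you anticipate); and the heart of the matter is the case $j=i$ for imaginary $i$, governed by the interaction of $\eit$ and $\eit^*$ on characteristic subspaces. For real $i$ the paper simply invokes \cite{KS97}, as you do implicitly.

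The genuine gap is that your ``finite case-check'' for $j=i$, $i\in I^{\im}$ is where almost all of the new content lies, and your proposal does not engage with it. The tensor-product rule for $\eit$ at an imaginary index has \emph{three} branches, so the verification splits according to whether $\vphi_i(\eit^{*c}\Lambda)\le 0$, lies in $(0,-a_{ii}]$, or exceeds $-a_{ii}$, and each branch needs a separate geometric argument. The middle branch is the one with no analogue in \cite{KS97}: there the tensor rule outputs $0$, so one must prove that $\eit\Lambda=0$, i.e.\ that for generic $B$ the characteristic subspace $\C\langle B\rangle_i\cdot\sum_{h\cl k\to i}\operatorname{Im}B_h$ is all of $V_i$. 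The paper does this by first forcing $m=c$ (hence $B_h=0$ for all $h\cl i\to k$, $k\neq i$) by a weight computation, then using that $\Lambda$ is coisotropic so the $B_{\ol h}$ may be chosen freely, and finally invoking regular semisimplicity of the loop operators to generate $V_i$. None of this is foreseen in your plan. Likewise, in the third branch the key step is the containment $U\subset W$ of the $\eit^*$-characteristic space in the $\eit$-characteristic space, proved by a genericity argument that again leans on the regular semisimple loop to split $V_i=W\oplus T$ and show $U$ misses $T$; you gesture at ``a codimension lemma'' but the actual statement and its proof are absent. So the proposal is a correct roadmap with the same architecture as the paper, but the decisive geometric steps --- precisely the ones that distinguish the generalized Kac--Moody case from \cite{KS97} --- are not supplied.
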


\begin{proof}
It is clear that the underlying map is injective. We will prove
\begin{equation} \label{eq:main}
\Psi_i(\ejt \Lambda) = \ejt \Psi_i(\Lambda) \ \ \text{for all} \ \
j\in I. \end{equation}

We distinguish several cases. If $i \in I^{re}$, then the proof of
\cite{KS97} goes through with no modification. So we assume that $i
\in I^{im}$.

\vspace{.1in}

\noindent \underline{Case 1)} $i \neq j$.

\vspace{.1in}

Since $\varepsilon_j'(b_i(-c)) = -\infty$, the tensor product rule
yields
$$\ejt \Psi_i(\Lambda) = \ejt \, (\eit^{*c} \Lambda \ot b_i(-c)) = \ejt
\eit^{*c} \Lambda \ot b_i(-c).$$

On the other hand, we have
$$\Psi_i(\ejt \Lambda)  = \eit^{*d} (\ejt \Lambda) \ot b_i(-d),$$
where $d=\varepsilon_i^*(\ejt \Lambda)$. Assume that
$\ejt(\Lambda)=0$; i.e., $\varepsilon_j(\Lambda)=0$. We claim
$\varepsilon_j(\eit^{*c}(\Lambda))=0$ as well so that $\ejt (
\eit^{*c}(\Lambda))=0$. Indeed, we have

\vspace{.1in}

\begin{lem} If $i \neq j$, then $\varepsilon_j(\eit^*(\Lambda))=\varepsilon_j(\Lambda)$
for every $\Lambda$.
\end{lem}
\begin{proof} It is enough to see that $\varepsilon_j(\eit^{*l} \Lambda)=\varepsilon_j(\Lambda)$
if $\varepsilon^*_i(\Lambda)=l$. Let $B$ be a generic point of
$\Lambda$ and let $B' =p_1q^{-1}(B)$ be the corresponding generic
point of $\eit^{*l}(\Lambda)$ (see \eqref{E:P2}). Then for any edge $h
\in H$ with $\sink(h) \neq i$, we have
$\mathrm{Im}(B_h)=\mathrm{Im}(B'_h)$. In particular,
$$\mathbf{C}\langle B \rangle_i \cdot \sum_{\substack{h\cl k \to j\\ k \neq j}} \mathrm{Im}\;B_h =
\mathbf{C}\langle B' \rangle_i \cdot \sum_{\substack{h\cl k \to j\\ k
\neq j}} \mathrm{Im}\;B'_h,$$ which gives the desired equality.
\end{proof}

Now assume that $\varepsilon_j(\Lambda)>0$. To prove
\eqref{eq:main}, thanks to the above lemma, we have only to show
that $\ejt \eit^{*c}(\Lambda)=\eit^{*c} \ejt(\Lambda)$, which is a
direct consequence of the following Lemma.

\vspace{.1in}

\begin{lem} If $i \neq j$, then $\ejt \eit^*(\Lambda)=\eit^*\ejt(\Lambda)$ for every $\Lambda$.
\end{lem}
\begin{proof} Put $a=\varepsilon_j(\Lambda)=\varepsilon_j(\eit^* (\Lambda))$.
It is enough to show that $\ejt^a
\eit^*(\Lambda)=\eit^*\ejt^a(\Lambda)$ for all $\Lambda$. Indeed, we
then have $\ejt^{a-1} \eit^* \ejt(\Lambda)=\eit^*\ejt^{a-1} \ejt
(\Lambda)=\eit^* \ejt^a(\Lambda)=\ejt^a
\eit^*(\Lambda)=\ejt^{a-1}\ejt \eit^*(\Lambda)$ from which we deduce
$\eit^* \ejt(\Lambda)=\ejt \eit^*(\Lambda)$. Similarly, it is enough
to prove that $\eit^{*b} \ejt^a (\Lambda)=\ejt^a\eit^{*b}(\Lambda)$,
where $b=\varepsilon_i^*(\Lambda)=\varepsilon_i^*(\ejt^a(\Lambda))$.
This can be done by chasing the diagram given in the following,
where $B \in \Lambda$ is a generic point, $\sigma = \alpha - a
\alpha_j$, $\beta = \alpha - b \alpha_i$, and the middle horizontal
line (resp.\ the middle vertical line) represents the short exact
sequence defining $\eit^{*b}(\Lambda)$ (resp.\ $\ejt^a(\Lambda)$).

$$\xymatrix{
&  & 0 \ar[d] & 0 \ar[d] &\\
0 \ar[r] & V_{b\alpha_i} \ar[r] \ar@{=}[d] & V_{\sigma} \ar [r]
\ar[d]&
\mathrm{Im}(V_{\sigma} \rightarrow V_{\beta}) \ar[r] \ar[d]  & 0\\
0 \ar[r] & V_{b \alpha_i} \ar[r]  & V_{\alpha} \ar[r] \ar[d]
& V_{\beta} \ar[r] \ar[d] & 0\\
& & V_{a \alpha_j}  \ar@{=}[r] \ar[d]
& V_{a \alpha_j}  \ar[d]  &\\
&  & 0 & 0 & }$$

%$$\xymatrix{
%& 0 & 0 & &\\
%& V_{a\alpha_j} \ar[u] \ar@{=}[r] & V_{a\alpha_j} \ar[u]& &\\
%0 \ar[r] & V_{\beta} \ar[u]^-{\phi'_j} \ar[r]^{(\phi')^*_i} & V_{\alpha}
%\ar[u]^-{\phi'_j} \ar[r]^{\phi^*_i} & V_{b\alpha_i} &\\
%0 \ar[r] & V_{\beta} \cap V_{\sigma} \ar[u]^-{\phi_j} \ar[r]^{(\phi')^*_i}
%& V_{\sigma}  \ar[u]^-{\phi_j} \ar[r]^{\phi^*_i} & V_{b\alpha_i} \ar@{=}[u] &\\
%& 0\ar[u] & 0\ar[u] & & }$$

\noindent Note that $V_{\beta}$ and $V_{\sigma}$ are uniquely
determined by $B$, and that the right column and the top
row represent $\ejt^a\eit^{*b}(\Lambda)$
and $\eit^{*b}\ejt^a(\Lambda)$, respectively.%\cmt{M:exchanged}
\end{proof}

\vspace{.1in}

\noindent \underline{Case 2)} $i =j$.

Write
\begin{equation*}
\begin{aligned}
\wt(\Lambda) &  = -\alpha = -m \alpha_i -\sum_{k\neq i} m_k \alpha_k, \\
\wt(\eit^{*c}(\Lambda)) &= -\alpha + c \alpha_i = -(m-c) \alpha_i -
\sum_{k\neq i} m_k \alpha_k.
\end{aligned}
\end{equation*}
Thus
\begin{equation*}
\begin{aligned}
\varphi_i (\eit^{*c} (\Lambda)) & = (m-c) (-a_{ii}) + \sum_{k\neq i}
m_k (-a_{ik}), \\
\varepsilon'_i (b_i(-c))& = 0.
\end{aligned}
\end{equation*}

To prove our claim, we consider the following three cases:
%according to the values of $\varphi_i (\eit^{*c} (\Lambda))$:
\begin{itemize}
\item[(a)] $\varphi_i (\eit^{*c} (\Lambda)) \leq 0$,
\item[(b)] $ 0 < \varphi_i (\eit^{*c} (\Lambda)) \leq -a_{ii}$,
\item[(c)] $-a_{ii} < \varphi_i (\eit^{*c} (\Lambda))$.
\end{itemize}

\vspace{.2in}

(a) The condition (a) implies that $(m-c) (-a_{ii}) =0$ and $m_k
(-a_{ik})=0$ for all $k \neq i$. Hence $\dim V_{k}=0$ whenever there
is an arrow $h\cl i \rightarrow k$ $(k \neq i)$, which implies
$B_{h}=B_{h}^t =0$ for all such $h$. Hence
$$c = \varepsilon_i^*(\Lambda) = \mathrm{codim}_{V_i} 0 = \dim V_i =
m.$$ For similar dimension reasons, we have
$\varepsilon^*_i(\eit(\Lambda))=c-1$. Moreover, for any $l \geq
1$, our Lagrangian varieties decompose as
$$\N_{\alpha-l_{\alpha_i}} \simeq \N_{(m-l)\alpha_i} \times \N_{\alpha'},$$
where $\alpha'=\sum_{k \neq i} m_k \alpha_k$. The Kashiwara
operators $\eit, \eit^*$ act on the first component of this
decomposition. By Lemma~\ref{L:la}, we have
$\Irr\;\N_{l\alpha_i}=\{pt\}$ for all $l$, and it is clear
that $\eit^*=\eit \cl \Irr\;\N_{l\alpha_i} \stackrel{\sim}{\to}
\Irr\;\N_{(l-1)\alpha_i}$ for all $l$. In particular,
$\eit^{*(c-1)}\eit(\Lambda)=\eit^{*c}(\Lambda)$ and therefore
\begin{equation*}
\begin{aligned}
\eit \Psi_i(\Lambda) & = \eit (\eit^{*c} \Lambda \ot b_i(-c)) =
\eit^{*c} \Lambda \ot b_i(-c+1) \\
& = \eit^{*(c-1)}(\eit \Lambda) \ot b_i(-c+1) = \Psi_i(\eit
\Lambda),
\end{aligned}
\end{equation*}
which proves our claim in the case (a).

\vspace{.2in}

(b) In this case, we have $a_{ii}<0$.
%If $a_{ii}=0$, then condition
%(b) cannot be satisfied. Thus we assume that $a_{ii} \neq 0$.
We already know $m\ge c$. The condition (b) implies
$$(m-c-1) a_{ii} \ge \sum_{k\neq i} m_k (-a_{ik}) \ge 0.$$
Since $a_{ii}<0$, we must have $m \le c+1$; i.e., $m=c$ or $c+1$.

If $m=c+1$, we have $m_k = \dim V_k = 0$ whenever $a_{ik} \neq 0$
$(k\neq i)$, and hence
$$c=\varepsilon_i^*(B) = \mathrm{codim}_{V_i} \C \langle B^t \rangle_i
\sum_{h\cl i \rightarrow k, k \neq i} \Im B_{h}^t =\mathrm{codim}_{V_i}
0 = m = c+1,$$ which is a contradiction.

Hence $m=c$ and $\sum_{k\neq i} m_k (-a_{ik}) >0$. By the
definition, $m=c$ means that $B_h=0$ for all $h\cl i \to k, k \neq i$.
Moreover, there exists $k$ such that $m_k>0$ and $a_{ik} \neq 0$. We
claim that under these conditions $\varepsilon_i(\Lambda)=0$.
Indeed, since $B_h=0$ for all $h\cl i \to k$ on $\Lambda$, which is
coisotropic, all $B_{\overline{h}}$ for $\overline{h}\cl k \to i$ may
be chosen arbitrarily.
%in the definition of $\N_{\alpha}$.
But because $B_{\sigma}$ is regular semisimple for any $\sigma \in
\overline{\Omega}_i^{loop}$, for a generic $B \in \Lambda$, we can
choose $B_{\oh}$ such that $\C \langle B \rangle_i \, \mathrm{Im}
B_{\oh} = V_{i}$. It follows that
$$\mathbf{C}\langle B \rangle_i \cdot \sum_{\substack{h\cl k \to i\\ k \neq i}}
\mathrm{Im}\;B_h=V_i$$
as wanted. Hence $\eit(\Lambda)=0$ and by the tensor product rule,
we obtain
$$\eit \Psi_i(\Lambda) = \eit (\eit^{*c} \Lambda \ot b_i(-c)) = 0
= \Psi_i (\eit \Lambda).$$

\vspace{.2in}

(c) If $m=c$, the condition (c) implies there exists $k \neq i$
such that $m_k > 0$, $a_{ik} \neq 0$. By the same argument in (b),
one can deduce $\eit \Lambda =0$. On the other hand, since $m=c$,
we have $\dim_i(\eit^{*c}(\Lambda))=0$ and therefore
$\varepsilon_i(\eit^{*c} \Lambda) = 0$, which implies $\eit
(\eit^{*c} \Lambda) =0$. Hence by the tensor product rule, we have
\begin{equation*}
\begin{aligned}
\eit \Psi_i(\Lambda) &= \eit(\eit^{*c} \Lambda \ot b_i(-c) = \eit
(\eit^{*c} \Lambda) \ot b_i(-c) \\
&= 0 \ot b_i(-c) = 0 = \Psi_i(\eit \Lambda).
\end{aligned}
\end{equation*}

Let us now assume that $m>c$. By the tensor product rule again, we
have to prove that $\eit^{*c}\eit(\Lambda)=\eit \eit^{*c}(\Lambda)$
and $\varepsilon_i^{*}(\eit(\Lambda)) =c$. Let $B$ be a generic
element of $\Lambda$ and let us consider the characteristic spaces
$$W=\mathbf{C}\langle B \rangle_i \cdot \sum_{\substack{h\cl j \to i\\ j \neq i}} \mathrm{Im}\; B_h,$$
$$U=\bigcap_{\substack{h\cl i \to j\\ j \neq i}} \mathrm{Ker}
\left(B_h \cdot \mathbf{C}\langle B \rangle_i \right).$$ Then we have
$\dim\;U=\varepsilon_i^*(B)=c$ and
$\mathrm{codim}_{V_i}\;W=\varepsilon_i(B)$.

%\noindent \underline{Subcase i)} \ $W + U = V_i$.

%By the definition, the element $B'\seteq \eit^{*c}(B)$ acts on $V'$,
%where $V'_i=V/U$. Since $W \rightarrow V_{i}/U$ is surjective,
%$W \twoheadrightarrow V_i/U$,
%we have $\varepsilon_i(B')=0$ and thus $\eit \eit^{*c}(\Lambda)=0$.
%On the other hand, for a generic $B''\in \eit(\Lambda)$, we have
%$$\dim\; \mathbf{C}\langle B'' \rangle_i \cdot
%\sum_{\substack{h\cl j \to i\\ j \neq i}} \mathrm{Im}\;B''_h =
%\dim\;W.$$ Since $V_i=U+ W$, it follows that
%$$\dim\; \bigcap_{\substack{h\cl i \to j\\j \neq i}} \mathrm{Ker} \left(B''_h
%\cdot  \mathbf{C}\langle B'' \rangle_i \right) < \dim\;U=c.$$
%Therefore $\eit^{*c}(\eit(\Lambda))=0$ as wanted.

%\vspace{.1in}

%\noindent \underline{Subcase ii)} \ $W + U \subsetneq V_i$.

%We begin with the following result~:

%\vspace{.05in}

%\noindent \textbf{Claim.}

We first claim $U \subset W$. Let $d=\varepsilon_i(\Lambda)$ and put
$B'=\eit^d(B)$. The operator $B'$ acts on the subspace $V' \subset
V$ with $V'_k=V_k$ for $k \neq i$ and $V'_i=W$. Moreover
$B\vert_{V'}=B'$ and $B$ can be viewed as a (generic) element in the
fiber of $\{B'\} \times \N_{d \alpha_i}$ under the map $B \mapsto
(B|_{V'}, B|_{V/V'})$. Take $\sigma \in \overline{\Omega}_i^{loop}$
so that $B_{\sigma}$ is regular semisimple. Since $B_{\sigma}$
preserves $W$, we may choose a splitting $V_i=W \oplus T$ invariant
under $B_{\sigma}$. Let $\{v_1, \ldots, v_t\}$ be the basis of $T$
consisting of $B_{\sigma}$-eigenvectors. Since we have assumed
$m>c$, $U \neq V_i$, and there exists $k \neq i$ such that $a_{ik}
\neq 0$ and $V_k \neq \{0\}$. Let $h \cl i \to k$ be an edge in $H$.
Since $B_{h}|_{T}\cl T \to V_k$ may be chosen arbitrarily (see the
proof of Proposition~\ref{P:dim}), $B_h(v_l) \neq 0$ $(l=1, \ldots,
t)$ for generic $B$. Hence $U$ does not contain $v_1, \ldots, v_t$.
Since $U$ is invariant under $B_{\sigma}$, $U \subset W$.

%We have proved that
%$$U = \bigcap_{\substack{h\cl i \to j\\ j \neq i}}
%\text{Ker} \left( B_h \cdot \mathbf{C}\langle B \rangle_i \right)
%\subset W$$ as wanted.\qed

%\vspace{.1in}

By the above claim, we have $d\seteq\varepsilon_i(\Lambda)
=\varepsilon_i(\eit^{*c}(\Lambda))$ and for a generic $B \in
\Lambda$, $B'\seteq \eit^d \eit^{*c}(B)$ is the operator induced by $B$
on the space $\bigoplus_{j \neq i} V_j \oplus W/U$. On the the hand,
we have $\varepsilon_i^*(\eit(\Lambda))=\dim\;U=c$ and
$\varepsilon_i(\eit^{*c} \eit (\Lambda))=\varepsilon_i(\Lambda)-1$.
It is easy to see that $B''\seteq\eit^{d-1} \eit^{*c}\eit (B)$
coincides with $B'$ and hence that $\eit^{*c}\eit(\Lambda) = \eit
\eit^{*c}(\Lambda)$. We are done with case (c), and hence with the
proof of Theorem~\ref{thm:embedding}
\end{proof}

\vskip 2mm

Now we obtain the main result of this paper.

\begin{thm}\label{thm:main}
There exists a crystal isomorphism
$$\mathcal{B} = \coprod_{\alpha \in Q_{+}} \Irr \, \N_{\alpha}
\overset {\sim} \longrightarrow B(\infty).$$
\end{thm}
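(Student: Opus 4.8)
The plan is to invoke the characterization of $B(\infty)$ provided by Theorem~\ref{thm:B(infty)}. That theorem reduces the proof of the crystal isomorphism $\mathcal{B} \cong B(\infty)$ to verifying the four axioms (i)--(iv) for the crystal $\mathcal{B}$ constructed in Theorem~\ref{thm:crystal}, together with the existence of a highest weight element sent to $\mathbf{1}$. So the entire proof consists of checking these hypotheses one by one.

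First I would verify axioms (i)--(iii), all of which are essentially immediate from the construction. Axiom (i), $\wt(\mathcal{B}) \subset -Q_+$, holds because $\wt(\Lambda) = -\alpha$ for $\Lambda \in \Irr\,\N_\alpha$ with $\alpha \in Q_+$, and $\mathcal{B} = \coprod_{\alpha \in Q_+} \Irr\,\N_\alpha$. For axiom (ii), the unique irreducible component of weight $0$ is the point $\N_0 = \{0\}$, giving the distinguished element $b_0$ with $\wt(b_0) = 0$; this will be the element sent to $\mathbf{1}$. Axiom (iii) asserts that every $\Lambda \neq b_0$ has $\eit \Lambda \neq 0$ for some $i$; by the definition of the Kashiwara operators in \eqref{eq:eit}, $\eit \Lambda \neq 0$ precisely when $\varepsilon_i(\Lambda) > 0$, and Lemma~\ref{lem:varepsilon}(b) guarantees that if $\varepsilon_i(\Lambda) = 0$ for all $i$ then $\alpha = 0$, i.e.\ $\Lambda = b_0$.

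The substance of the argument lies in axiom (iv): for each $i \in I$ there must exist a strict crystal embedding $\Psi_i \colon \mathcal{B} \to \mathcal{B} \otimes B_i$. This is exactly the content of Theorem~\ref{thm:embedding}, which has already been established, with $\Psi_i(\Lambda) = \eit^{*c}\Lambda \otimes b_i(-c)$ where $c = \varepsilon_i^*(\Lambda)$. Thus all four hypotheses of Theorem~\ref{thm:B(infty)} are satisfied, and we conclude that there is a crystal isomorphism $\mathcal{B} \isoto B(\infty)$ sending $b_0 = \N_0$ to $\mathbf{1}$.

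The real obstacle has therefore already been overcome in proving Theorem~\ref{thm:embedding}, and the present theorem is a formal consequence once the abstract characterization is in place. If I were to point to any delicate aspect of the present deduction itself, it is simply the bookkeeping of ensuring that the distinguished element $b_0$ used in axiom (ii) is the same one appearing in axiom (iii) and in the conclusion, so that the isomorphism is correctly normalized to send $\N_0 \mapsto \mathbf{1}$; but this is routine given the explicit weight grading on $\mathcal{B}$.
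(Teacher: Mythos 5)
Your proposal is correct and follows exactly the same route as the paper, which simply cites Theorem~\ref{thm:B(infty)}, Lemma~\ref{lem:varepsilon}, Theorem~\ref{thm:crystal}, and Theorem~\ref{thm:embedding}; you have merely spelled out the verification of the four hypotheses that the paper leaves implicit.
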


\begin{proof}
Our assertion follows from Theorem \ref{thm:B(infty)}, Lemma
\ref{lem:varepsilon}, Theorem \ref{thm:crystal}, and Theorem
\ref{thm:embedding}.
\end{proof}

\vskip 3mm

\begin{example} \hfill

(a) Let $(I, H)$ be the quiver with one vertex $I=\{i\}$ and $2t$
edge loops in $H$. Thus there are $t$ edge loops in $\Omega$ and
$\overline{\Omega}$, respectively, and the corresponding
Borcherds-Cartan matrix is $A=(2 - 2t)$.

For $l \ge 1$, let $V=\C^{l}$ be the $I$-graded vector space with
$\underline{\dim} V = l \alpha_i$. By Lemma~\ref{L:la}, the variety
$\N_{l \alpha_i}$ is irreducible and Lagrangian. Moreover, if
$B=(B_i, \overline{B_i})_{1 \le i \le t}$ is a generic element of
$\N_{l \alpha_i}$, then we have
\begin{equation*}
\begin{aligned}
& \varepsilon_i(B)= \mathrm{codim}_{V_i} 0 = l, \\
& \varepsilon_i^*(B)= \varepsilon_i(B^{t}) = \mathrm{codim}_{V_i} 0 =
l.
\end{aligned}
\end{equation*}
Hence we have
$$\Psi_i(\fit^l \textbf{1}) = \textbf{1} \otimes b_i(-l),$$
and the crystal structure on $B(\infty)$ is given as follows.

$$\textbf{1} \overset{i} \longrightarrow \fit \textbf{1} =
\N_{\alpha_i} \overset{i} \longrightarrow \cdots\cdots \overset{i}
\longrightarrow \fit^l \textbf{1} = \N_{l \alpha_i} \overset{i}
\longrightarrow \cdots\cdots
$$

\vskip 3mm

(b) Let $(I, H)$ be a quiver, where $I=\{i, j\}$ and $H$ consists of
two edge loops at $i$, one arrow from $i$ to $j$, and one arrow from
$j$ to $i$. Thus the corresponding Borcherds-Cartan matrix is
$A=\left(\begin{matrix} 0 & -1 \\ -1 & 2 \end{matrix} \right)$. We
choose an orientation $\Omega$ consisting of an edge loop at $i$ and
an arrow from $i$ to $j$.

We will compute $\fjt \fit^2 \textbf{1}$. Let $V=V_i \oplus V_j =
\C^2 \oplus \C$ be the $I$-graded vector space with
$\underline{\dim} V = 2 \alpha_i + \alpha_j$, and let $B=(B_1,
\overline{B_1}, B_2, \overline{B_2})$ be a generic element in $\N_{2
\alpha_i + \alpha_j}$, where $B_1, \overline{B_1}\cl \C^2 \rightarrow
\C^2$, $B_2\cl \C^2 \rightarrow \C$, $\overline{B_2}\cl \C \rightarrow
\C^2$ are the linear maps satisfying the conditions for
$\N_{2\alpha_i + \alpha_j}$. By applying the Kashiwara operators
successively to $\textbf{1}$, one can deduce that, for an
appropriate basis of $V$, $B$ has the form %\cmt{M: basis}
$$B=(B_1, \overline{B_1}, B_2, \overline{B_2}) = \left(0,
\left(\begin{matrix} \la & a \\ 0 & \mu
\end{matrix} \right), 0, \left( \begin{matrix} x \\ y
\end{matrix} \right) \right),$$
where $a, \la, \mu, x, y \in \C$, $\la \neq \mu$. Note that
\begin{equation*}
\begin{aligned}
& \varepsilon_i(B)= \mathrm{codim}_{V_i}\C \langle B_1, \overline{B_1} \rangle_i \mathrm{Im} \overline{B_2} =0, \\
& \varepsilon_i^*(B)= \varepsilon_i(B^{t}) = \mathrm{codim}_{V_i} \C
\langle B_1^{t}, \overline{B_1}^{t} \rangle 0  = 2, \\
& \varepsilon_j(B) = \mathrm{codim}_{V_j} \mathrm{Im}B_2 =
\mathrm{codim}_{V_j} 0 = 1, \\
& \varepsilon_j^{*} (B) = \mathrm{codim}_{V_j}
\mathrm{Im}\overline{B_2}^t = 0.
\end{aligned}
\end{equation*}
Hence we have
\begin{equation*}
\begin{aligned}
& \Psi_i(\fjt \fit^2 \textbf{1}) = \eit^{*2} (\fjt \fit^2
\textbf{1})
\ot b_i(-2) = \fjt \textbf{1} \ot b_i(-2), \\
& \Psi_j(\fjt \fit^2 \textbf{1}) = \fjt \fit^2 \textbf{1} \ot
b_j(0).
\end{aligned}
\end{equation*}

\qed
\end{example}

%%%%%%%%%%%%%%%%%%%%%%%%%%%%%%%%%%%%%%%%%%%%%%%%%%%%%%%%%%%%%%%%%%%%%%%%%%%%%%%%%%%%%%%%%%%%%%%%%%%%%
\vskip 1cm

\providecommand{\bysame}{\leavevmode\hbox
to3em{\hrulefill}\thinspace}

\end{document}

%%%%%%%%%%%%%%%%%%%%%%%%%%%%%%%%%%%%%%%%%%%%%%%%%%%%%%%%%%%%%%%%%%%%%%%%%%%%%%%%%%%%%%%%%%%%%%%%%%%